\newcommand{\Bx}{\bm{x}}
\newcommand{\Bb}{\bm{b}}
\newcommand{\BW}{\bm{W}}
\newcommand{\jg}[1]{{#1}}
\title{Friedrichs Learning: Weak Solutions of Partial Differential Equations via Deep Learning}
\author{Fan Chen
\vspace{0.1in}\\
School of Mathematical Sciences, and MOE-LSC,\\
 Shanghai Jiao Tong University, Shanghai 200240, China  ({\tt alexnwish@sjtu.edu.cn})
\vspace{0.1in}\\
Jianguo Huang
\vspace{0.1in}\\
School of Mathematical Sciences, and MOE-LSC,\\
 Shanghai Jiao Tong University, Shanghai 200240, China  ({\tt jghuang@sjtu.edu.cn})
\vspace{0.1in}\\
Chunmei Wang
\vspace{0.1in}\\
Department of Mathematics,  University of Florida, Gainesville, FL 32611, USA  ({\tt chunmei.wang@ufl.edu})
\vspace{0.1in}\\
Haizhao Yang
  \vspace{0.1in}\\
  Department of Mathematics, University of Maryland, College Park, MD 20742,  USA  ({\tt hzyang@umd.edu})
}
\begin{document}
\maketitle
\begin{abstract}
This paper proposes Friedrichs learning as a novel deep learning methodology that can learn the weak solutions of PDEs via a minimax formulation, which transforms the PDE problem into a minimax optimization problem to identify weak solutions. The name ``Friedrichs learning" is to highlight the close relation between our learning strategy and Friedrichs theory on symmetric systems of PDEs. The weak solution and the test function in the weak formulation are parameterized as deep neural networks in a mesh-free manner, which are alternately updated to approach the optimal solution networks approximating the weak solution and the optimal test function, respectively. Extensive numerical results indicate that our mesh-free Friedrichs learning method can provide reasonably good solutions for a wide range of PDEs defined on regular and irregular domains, where conventional numerical methods such as finite difference methods and finite element methods may be tedious or difficult to be applied, especially for those with discontinuous solutions in high-dimensional problems.
\end{abstract}

\begin{keywords}
Partial Differential Equation; Friedrichs' System; Minimax Optimization; Weak Solution; Deep Neural Network; High Dimensional Complex Domain.
\end{keywords}

\begin{AMS}
65M75; 65N75; 62M45.
\end{AMS}

\pagestyle{myheadings}
\thispagestyle{plain}
\markboth{Friedrichs Learning for Weak Solutions of PDEs}{Friedrichs Learning for Weak Solutions of PDEs}

\section{Introduction}
High-dimensional PDEs and PDEs defined on complex domains are important tools in physical, financial, and biological models, etc. \cite{Lee2002,Ehrhardt2008,Yserentant2005,Gaikwad2009,Wales2003}. Generally speaking, they do not have closed-form solutions making numerical solutions of such equations indispensable in real applications. First, developing numerical methods for high-dimensional PDEs has been a challenging task due to the curse of dimensionality in conventional discretization. %For example, in traditional methods such as finite difference methods and finite element methods, $O(NTd)$ degrees of freedom is required for a $d$-dimensional problem if we set $N$ grid points or basis functions in each direction to achieve $O(\frac{1}{N})$ accuracy. Even if $d$ becomes moderately large, the exponential growth $N^d$ in the dimension $d$ makes traditional methods immediately computationally intractable.
Second, conventional numerical methods rely on mesh generation that requires profound expertise and programming skills without the use of commercial software. In particular, for problems defined in complicated domains, it is challenging and time-consuming to implement conventional methods. As an efficient parametrization tool for high-dimensional functions \cite{Barron1993,E2019,Montanelli2019,Montanelli2019_2,SIEGEL2020313,Hutzenthaler2020,Hutzenthaler2020_2,Shen2021,Shen2021_2} with user-friendly software (e.g., TensorFlow and PyTorch), neural networks have been applied to solve PDEs via various approaches recently. The idea of using neural networks to solve PDEs dates back to the 1990s \cite{Lee1990,Gobovic1994,Dissanayake1994,Lagaris1998} and was revisited and popularized recently \cite{E2017,Han2018,E2018,Khoo2017SolvingPP,Sirignano2018,Berg2018,Li2019,Beck2021, Hutzenthaler2020_1,Hutzenthaler2020,Cai2020,RAISSI2019686,Li2019_2,Zang2020,Bao2020,liu2020multiscale,Kharazmi2021,Jagtap2020}.

Many network-based PDE solvers are concerned with the classical solutions that are differentiable and satisfy PDEs in common sense. Unlike classical solutions, weak solutions are functions for which the derivatives may not always exist but which are nonetheless deemed to satisfy the PDE in some precisely defined sense. These solutions are crucial because many PDEs in modeling real-world phenomena do not have sufficiently smooth solutions. %Though there has been extensive research on network-based PDE solvers in the weak form, to the best of our knowledge, solvers reported for weak solutions in the $L^2$ space without knowing the discontinuity are still limited.
Motivated by the seminal work in \cite{Bao2020}, we propose Friedrichs learning as an alternative method that can learn the weak solutions of elliptic, parabolic, and hyperbolic PDEs in $L^2(\Omega)$ via a novel minimax formulation devised and analyzed in Section 2.3. Since the formulation is closely related to the work of Friedrichs theory on symmetric systems of PDEs (cf. \cite{Friedrichs1958}), we call our learning strategy the Friedrichs learning. The main idea is to transform the PDE problem into a minimax optimization problem to identify weak solutions. Note that no regularity for the solution is required in Friedrichs learning, which is the main advantage of the proposed method, making it applicable to a wide range of PDE problems, especially those with discontinuous solutions. In addition, Friedrichs learning is capable of solving PDEs with discontinuous solutions without a priori knowledge of the location of the discontinuity. Although Friedrichs learning may not be able to provide highly accurate solutions, it could solve a coarse solution without a priori knowledge of the discontinuity. This rough estimation of the discontinuity could serve as a good initial guess of conventional computation approaches for highly accurate solutions following the Int-Deep framework in \cite{Huang2020}. Finally, theoretical results are provided to justify the Friedrichs learning framework for various PDEs.

The main philosophy of Friedrichs learning is to reformulate a PDE problem into a minimax optimization, the solution of which is a test deep neural network (DNN) that maximizes the loss and a solution DNN that minimizes the loss. For a high-order PDE, we first reformulate it into a first-order PDE system by introducing auxiliary variables, the weak form of which naturally leads to a minimax optimization using integration by parts according to the theory of Friedrichs' system \cite{Friedrichs1958}. The above-mentioned feature is the crucial difference from existing deep learning methods for weak solutions \cite{E2018,Zang2020}.  Let us introduce the formulation of Friedrichs learning using first-order boundary value problems (BVPs) with homogeneous boundary conditions without loss of generality. The initial value problems (IVPs) can be treated as BVPs, where the time variable is considered to be one more spatial variable. The non-homogeneous boundary conditions can be easily transferred to homogeneous ones by subtracting the boundary functions from the solutions. %Note that minimax problems are usually more difficult to solve than minimization problems. Leveraging recent training algorithms for generative adversarial networks (GANs), the minimax problem in Friedrichs learning can be solved to a reasonable accuracy as demonstrated by preliminary results.

In the seminal results by Friedrichs in \cite{Friedrichs1958} and other investigations in \cite{Antonic2010,Ern2007}, an abstract framework of the boundary value problem of the first-order system was established, which is referred to as Friedrichs' system in the literature. Let us introduce the concept of Friedrichs' system using a concrete and simple example and illustrate the main idea and intuition of the Friedrichs learning proposed in this paper. A more detailed abstract framework of Friedrichs learning will be discussed later in Section \ref{sec:FL}. Let $r\in \mathbb N$ and $\Omega\subset \mathbb{R}^d$ be an open and bounded domain with Lipschitz boundary $\partial\Omega$.  The notation $(\cdot)^{\intercal}$  denotes the  transpose of a vector or a matrix throughout the paper.  We assume: 1) $\bm{A}_k \in [L^{\infty}(\Omega)]^{
r\times r}$, $\sum_{k=1}^d \partial_k \bm{A}_k\in [L^{\infty}(\Omega)]^{r\times r}$, $\bm{A}_k=\bm{A}_k^{\intercal}$ a.e. in $\Omega$ for  $k=1,\dots,d$,  and $\bm{C}\in [L^\infty(\Omega)]^{r\times r}$; 2) the full coercivity holds true, i.e., $\bm{C}+\bm{C}^{\intercal}-\sum_{k=1}^d\partial_k \bm{A}_k\geq 2\mu_0 \bm{I}_r$ a.e. in $\Omega$ for some $\mu_0>0$ and the identity matrix $\bm{I}_r\in \mathbb R^{r\times r}$. Then the first-order differential operator $T:\mathcal{D}  \rightarrow L$  with $L=[L^2(\Omega)]^{r}$ and $\mathcal{D}=[C_0^{\infty}(\Omega)]^r$ defined by $T\bm{u}:=\sum_{k=1}^d \bm{A}_k \partial_k\bm{u}+\bm{C}\bm{u}$ is called the Friedrichs operator, while the first-order system of PDEs $T\bm{u}=\bm{f}$ is called the Friedrichs' system, where $\bm{f}$ is a given data function in $L$ and the space $C_0^{\infty}(\Omega)$ consists of all infinitely differentiable functions with compact support in $\Omega$. Throughout this paper, the bold font will be used for vectors and matrices in concrete examples. In our abstract framework, PDE solutions are considered as elements of a Hilbert space, so they will not be denoted as bold letters.

Friedrichs \cite{Friedrichs1958} also introduced an abstract framework for representing boundary conditions via matrix-valued boundary fields. First, let $\bm{A}_{\bm{n}}:=\sum_{k=1}^d n_k\bm{A}_k\in [L^\infty(\partial \Omega)]^{r\times r}$, where $\bm{n}=(n_1,\cdots,n_d)\in \mathbb{R}^d$ is the unit outward normal direction on $\partial\Omega$, and let $\bm{M}:\partial\Omega\rightarrow \mathbb{R}^{r\times r}$ be a matrix field on the boundary. Then a homogeneous Dirichlet boundary condition of Friedrichs' system is  prescribed by $(\bm{A}_{\bm{n}}-\bm{M})\bm{u}=\bm{0}$ on $\partial\Omega$ by choosing an appropriate $\bm{M}$ to ensure the well-posedness of Friedrichs' system. In real applications, $\bm{M}$ is given by physical knowledge. Let $V:=\mathcal{N}(\bm{A}_{\bm{n}}-\bm{M})$ and $V^*:=\mathcal{N}(\bm{A}_{\bm{n}}+\jg{\bm{M}^{\intercal}})$, where  $\mathcal{N}$ is the null space of the argument.
It has been proved that $\bm{u}$ solves the BVP
\begin{equation}\label{eqn:bvp}
T\bm{u}=\bm{f} \text{ in }\Omega\quad\text{and}\quad (\bm{A}_{\bm{n}}-\bm{M})\bm{u}=\bm{0}\text{ on }\partial\Omega,
\end{equation}
if and only if $\bm{u}$  solves the minimax problem
\[
\min_{\bm u \in V} \max_{\bm v \in  V^*} \mathcal{L}(\bm u,\bm v) := \frac{|(\bm u,\tilde T \bm v)_{L} - (\bm f,\bm v)_{L}|}{\| \tilde T \bm v\|_{L}},
\]
where $\tilde{T}: \mathcal{D}\to L$ is the formal adjoint of $T$. Hence, in our Friedrichs learning, DNNs are applied to parametrize $\bm{u}$ and $\bm{v}$ to solve the above minimax problem to obtain the solution of the BVP \eqref{eqn:bvp}. Friedrichs learning also works for other kinds of boundary conditions.

This paper is organized as follows. In Section \ref{sec:FL}, we devise and analyze Friedrichs minimax formulation for weak solutions of PDEs. In Section \ref{sec:EX}, several concrete examples of PDEs and their minimax formulations are provided. In Section \ref{sec:DL}, network-based optimization is introduced to solve the minimax problem in Friedrichs formulation. In Section \ref{sec:RS}, a series of numerical examples are provided to demonstrate the effectiveness of the proposed Friedrichs learning. Finally, we conclude this paper in Section \ref{sec:CC}.

\section{Friedrichs Minimax Formulation for Weak Solutions}\label{sec:FL}
In this section, we shall first recall some standard notations frequently used later on. Then, briefly review Friedrichs' system in a Hilbert space setting  \cite{Ern2007,Bui-Thanh2013}, followed by introducing and analyzing Friedrichs minimax formulation for weak solutions which is the foundation of Friedrichs learning.

{Let $\Omega\subset \mathbb R^d$ be a bounded domain with the Lipschitz boundary. Let $D_j=\frac{\partial}{\partial x_j}$ be the partial derivative operation with respect to $x_j$ in the weak sense. For a multi-index $\alpha=(\alpha_1,\cdots,\alpha_d)$ with each $\alpha_i$ being a non-negative integer, denote $D^\alpha=D_1^{\alpha_1}D_2^{\alpha_2}\cdots D_d^{\alpha_d}$. For a non-negative integer $k$ and a real number with $1\le p\le \infty$, define the Sobolev space $W^{k,p}(\Omega)$ as a vector space consisting of all functions $v\in L^p(\Omega)$ such that $D^{\alpha} v\in L^p(\Omega)$ for all multi-indices $\alpha$ with $|\alpha|= \sum_{j=1}^d \alpha_j\le k$, which is equipped with the following norm:
$$
\|v\|_{W^{k,p}(\Omega)}=\Big(\sum_{|\alpha|\leq k}\int_{\Omega} |D^{\alpha}u|^pdx\Big)^{1/p}, \quad 1\leq p <\infty;\qquad 
\|u\|_{W^{k, \infty}(\Omega)}=\sum_{|\alpha| \leq k} \operatorname{esssup}_{\Omega}\left|D^\alpha u\right|,
$$
where $\operatorname{esssup}_{\Omega}$ is the essential supremum for a function in $\Omega$. When $p=2$, $W^{k,2}(\Omega)$ is simply written as $H^k(\Omega)$. In addition, let $H_0^k(\Omega)$ be the closure of $C_0^{\infty}(\Omega)$ with respect to the norm of $H^k(\Omega)$, while $H^{-k}(\Omega)$ denotes the dual space of $H^k_0(\Omega)$.  We refer the reader to the monograph \cite{Adams1975} for details about Sobolev spaces and their properties.}

{Let $L$ denote a real Hilbert space, which is equipped with the inner product $(\cdot,\cdot)_L$ and the induced norm $\|\cdot\|_{L}$. For any two vectors in an Euclidean space, we use $(\cdot,\cdot)$ to represent their natural inner product and denote by  $\left\|\cdot\right\|_p$ the related $\ell_p$ norm for $1\le p \le \infty$; most of these symbols will appear in Sections \ref{sec:DL} and \ref{sec:RS}. For a vector space $W$ and its dual space $W'$, the notation $\langle\cdot,\cdot\rangle_{W\times W'}$ represents the duality pair between $W$ and $W'$. For any two Hilbert spaces $X$ and $Y$, denote by ${\mathcal L}(X,Y)$ the vector space consisting of all continuous linear operators from $X$ into $Y$. }
% and
% $$
% \|u\|_{W^{k,\infty}(\Omega)}= \sum_{|\alpha|\leq k}ess sup_{\Omega} |D^{\alpha} u|.
% $$
% The Sobolev space $W^{k, p}(\Omega)$ consists of functions in $L^p(\Omega)$ that have weak partial derivatives up to order $k$ and they belong to $L^p(\Omega)$.

% {(cf: necessary? omit?) Let $D$ be any open bounded domain with Lipschitz continuous boundary in $\mathbb{R}^d$. We use $(\cdot,\cdot)_{s,D}$, $|\cdot|_{s,D}$ and $\|\cdot\|_{s,D}$ to denote the inner product, semi-norm and norm in the Sobolev space $H^s(D)$ for any integer $s\geq0$, respectively. For simplicity, the subscript $D$ is  dropped from the notations of the inner product and norm when the domain $D$ is chosen as $D=\Omega$. For the case of $s=0$, the notations $(\cdot,\cdot)_{0,D}$, $|\cdot|_{0,D}$ and $\|\cdot\|_{0,D}$ are simplified as $(\cdot,\cdot)_D$, $|\cdot|_D$ and $\|\cdot\|_D$, respectively.}

\subsection{An Abstract Framework of Friedrichs' System}
First of all, we recall some basic results on Friedrichs' system developed in  \cite{Bui-Thanh2013, Ern2007} for later use in order to be self-contained. Let $L$ be a real Hilbert space, and dual space of $L$, denoted by $L'$, can be identified naturally with $L$ by the Riesz representation theorem. For a dense subspace ${\mathcal D}$ of $L$, we consider two linear operators $T:{\mathcal D}\to L$ and $\tilde{T}: {\mathcal D}\to L$ satisfying the following properties: for any ${u}, {v}\in {\mathcal D}$, there exists a positive constant $C$ such that
\begin{eqnarray}
    (T{u}, {v})_L&=&({u},\tilde{T}{v})_L,\label{p1}\\
    \|(T+\tilde{T}){u}\|_L &\leq& C\|{u}\|_L.\label{p2}
\end{eqnarray}
{It is worth noting that the two operators $T$ and $\tilde{T}$ are given simultaneously. Due to the property \eqref{p1}, we often call $\tilde{T}$ as the formal adjoint of $T$ and vice versa. Since the operators $T$ and $\tilde{T}$ play the same roles, we will focus on the forthcoming discussion for $T$, which can be applied to $\tilde{T}$ in a straightforward way.}  As shown in \cite[Sect. 5.5]{Aubin2000}, write $W_0$ as the completion of ${\mathcal D}$ with respect to the scalar product {$(\cdot,\cdot)_T=(\cdot,\cdot)_L+(T\cdot,T\cdot)_L$}. Then, we have by \eqref{p1} that
\[
{\mathcal D}\subset W_0\subset L=L^{\prime}\subset W_0^{\prime}\subset{ {\mathcal D^{\prime}}}.
\]
In addition, in view of \eqref{p2}, we know $W_0$ is also the completion of ${\mathcal D}$ with respect to the scalar product {$(\cdot,\cdot)_{\tilde{T}}=(\cdot,\cdot)_{\tilde{L}}+(T\cdot,T\cdot)_{\tilde{L}}$}. Thus, $\tilde{T}$ can be extended from ${\mathcal D}$ to $W_0$, and its true adjoint $(\tilde{T})^{*}\in \mathcal{L}(L;\ W_0^{\prime})$ can be viewed as the extension of $T$ to $L$. When there is no confusion caused, we still use the notation $T$ for this extension operator. This argument applies to $\tilde{T}$ as well.

We provide an example to make the above abstract treatment more accessible. Let $\Omega=(a,b)$. Choose ${\mathcal D}=C_0^{\infty}(\Omega)$ and $L=L^2(\Omega)$. Let $Tv=v^{\prime}$ and $\tilde{T}v=-v^{\prime}$ for all $v\in C_0^{\infty}(\Omega)$. {In this case, we have
	\[
	(v,w)_T=(v,w)_{\tilde{T}}=\int_a^b (vw+v^{\prime}w^{\prime})dx,\quad \forall\; v, w\in C_0^{\infty}(\Omega),
	\]
	so, by definition, the completion of $C_0^{\infty}(\Omega)$ with respect to the induced norm is exactly the Sobolev space $H_0^1(\Omega)$.
{Hence, according to Theorem 1.4.4.6 in \cite[p. 31]{Grisvard1985}, if we understand the derivative operator in the sense of distributions, we know  $(\tilde{T})^{*}\in \mathcal{L}(L^2(\Omega);\ H^{-1}(\Omega))$.} In other words, the derivative operator $(\cdot)^{\prime}$ can be viewed as a continuous linear operator from $L^2(\Omega)$ into $H^{-1}(\Omega)$.}

%{In the following sections we will use $W^{k, p}(\Omega)$ for $1 \leq p \leq \infty$ to represent the Sobolov space, which consists of functions $f \in L^p(\Omega)$ such that $f$ and its weak derivatives up to order $k$ have a finite $L^p$ norm. The so'bo'lov space has a natural $\|\cdot\|_{k,p}$ norm,
%\begin{equation*}
%    \|f\|_{k, p}=\left(\sum_{i=0}^k\left\|f^{(i)}\right\|_p^p\right)^{\frac{1}{p}}=\left(\sum_{i=0}^k \int\left|f^{(i)}(t)\right|^p d t\right)^{\frac{1}{p}}.
%\end{equation*}
%As a special case when $p=2$, $H^k$ is denoted as the space $W^{k,2}$ endowed with inner product. Furthermore, an important subspace $H_0^1\subset H^1(\Omega)$ is defined to be the completion of $C_c^{\infty}(\Omega)$ with respect to $\|\cdot\|_{1,2}$ norm.}

Next, as given in \cite[Lemma 2.1]{Ern2007}, define a graph space $W$ by
\begin{equation}
\label{eqn:graph}
W=\{u\in L;\; Tu \in L \},
\end{equation}
which is a Hilbert space with respect to the graph norm $\|\cdot\|_T=(\cdot, \cdot)_T ^{1/2}$. In addition, owing to \eqref{p2}, we have
\[
W= \{u \in L;\; \tilde Tu \in L\}.
\]
That means $W$ is also a graph space associated with $\tilde{T}$.

The abstract framework of Friedrichs' system concerns the solvability of the problem
\begin{equation}\label{tu}
    T{u}={f}\in L,
\end{equation}
and its solution falls in the graph space $W$. Obviously, the problem \eqref{tu} may not be well-posed since its solution in $W$ may not be unique. We are interested in constructing a subspace $V\subseteq W$ such that $T : V \to L$ is an isomorphism. A standard way is carried out as follows. We first define a self-adjoint boundary operator $B\in \mathcal{L}(W, W')$ as follows (cf. \cite{Ern2007}):
\begin{equation}\label{bterm}
    \langle B{u}, {v}\rangle_{W'\times W}=(T{u}, {v})_L-({u}, \tilde{T}{v})_L,\quad \forall\; u, v\in W.
\end{equation}

This operator plays a key role in the forthcoming analysis. Moreover, the identity \eqref{bterm} can be reformulated in the form
\[
(T{u}, {v})_L=({u}, \tilde{T}{v})_L+\langle B{u}, {v}\rangle_{W'\times W},
\]
which is usually regarded as an abstract integration by parts formula (cf. \cite{Ern2007}).

Furthermore, we assume that there exists an operator $M\in \mathcal{L}(W, W')$ such that
\begin{eqnarray}\label{pp1}
\langle Mw,w\rangle_{W'\times W}\geq 0,\quad
\forall\, w\in W,\\
W=\mathcal{N}(B-M)+\mathcal{N}(B+M),\label{pp2}
\end{eqnarray}
where $\mathcal{N}$ is the null space of its argument. Meanwhile, let $M^*\in \mathcal{L}(W, W')$ denote the adjoint operator of $M$ given by
$
\langle M^*u,v \rangle_{W'\times W}=\langle Mv,u \rangle_{W'\times W},   \forall\, u,v\in W.
$

To find $V$ such that the problem \eqref{tu} is well-posed, we should make an additional assumption for $L$ as follows; i.e.,
\begin{equation}
\label{p3}
((T+\tilde{T})v,v)_L\ge 2\mu_0 \|v\|^2_L,\quad \forall\, v\in L,
\end{equation}
where $\mu_0$ is a positive constant. Then we choose
\begin{equation}
\label{selection}
V=\mathcal{N}(B-M),\quad V^*=\mathcal{N}(B+M^*).
\end{equation}

We have the following important theory for Friedrichs' system \cite[Lemma 3.2 and Theorem 3.1]{Ern2007}.

\begin{theorem}
\label{key-FS}
Assume \eqref{p2} \eqref{p3},   \eqref{pp1} and \eqref{pp2} hold true. Let $V$ and $V^*$ be given by  \eqref{selection}. The following statements hold true:
\begin{enumerate}
\item For any $v\in W$, it holds
\begin{equation}
\label{a-coercive}
\mu_0\|v\|_L\le \|T v\|_L,\quad \mu_0\|v\|_L\le \|\tilde{T}v\|_L.
\end{equation}

\item For any $f\in L$, problem \eqref{tu} has a unique solution in $V$. In other words, $T$ is an isomorphism from $V$ onto $L$. Moreover, $\tilde{T}$ is an isomorphism from $V^*$ onto $L$.
\end{enumerate}
\end{theorem}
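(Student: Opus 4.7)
The plan is to split the proof into two independent tasks: first I will establish the $L$-coercivity estimates in Part 1, and then I will use them as the workhorse of the isomorphism claim in Part 2. Both parts hinge on the interplay between the symmetry relation \eqref{p1}, the continuity bound \eqref{p2}, and the full-coercivity hypothesis \eqref{p3}.

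For Part 1, I would start from the dense subspace $\mathcal{D}$. For $v \in \mathcal{D}$, taking $u = v$ in \eqref{p1} yields $(Tv, v)_L = (v, \tilde T v)_L = (\tilde T v, v)_L$, so the symmetric part reduces to $((T+\tilde T)v, v)_L = 2(Tv, v)_L$. Combining this with \eqref{p3} gives $(Tv, v)_L \ge \mu_0 \|v\|_L^2$, and Cauchy--Schwarz then delivers $\|Tv\|_L \ge \mu_0 \|v\|_L$. To extend the estimate to arbitrary $v \in W$, I would approximate $v$ in the graph norm by elements of $\mathcal{D}$; density of $\mathcal{D}$ in $W$ under $\|\cdot\|_T$ is a standard byproduct of \eqref{p1} and \eqref{p2}. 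The estimate for $\tilde T$ follows by reversing the roles of $T$ and $\tilde T$ in the same computation.

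For Part 2, injectivity of $T$ restricted to $V$ is immediate from Part 1, and the a priori estimate $\|u\|_L \le \mu_0^{-1} \|Tu\|_L$ also forces the range $T(V)$ to be closed in $L$. To obtain surjectivity, I would fix $g \in L$ with $(Tu, g)_L = 0$ for every $u \in V$ and argue $g = 0$. Since $\mathcal{D} \subseteq V$ (from \eqref{bterm}, $B$ vanishes on $\mathcal{D}$, together with the standard structural assumption that $M$ vanishes on $\mathcal{D}$), the identity $(g, Tu)_L = 0$ for $u \in \mathcal{D}$ defines $\tilde T g = 0$ distributionally, and $\tilde T g = 0 \in L$ places $g$ in the graph space $W$. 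Now Part 1 applied to $\tilde T$ forces $g = 0$, so $T(V) = L$, and the bounded inverse falls out of coercivity. The parallel argument, with the roles of $(T, V)$ and $(\tilde T, V^*)$ exchanged and using $M^*$ in place of $M$ via the symmetric version of \eqref{pp2}, yields the analogous statement for $\tilde T : V^* \to L$.

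The hard part will be justifying the duality step in Part 2: one needs $\mathcal{D} \subseteq V$ so that orthogonality on $V$ passes to orthogonality on $\mathcal{D}$, and one needs to promote the distributional vanishing $\tilde T g = 0$ into the graph space $W$ so that the coercivity estimate from Part 1 can be invoked. Both facts require careful use of \eqref{bterm}, \eqref{pp1}--\eqref{pp2}, and density of $\mathcal{D}$ in $L$; if the structural assumption $M|_{\mathcal{D}} = 0$ is not readily available, one could instead establish surjectivity by a Banach--Ne\v{c}as--Babu\v{s}ka inf-sup argument on $V \times V^*$, with the inf-sup constant $\mu_0$ supplied by Part 1 and with well-posedness of the transposed problem ensured by \eqref{pp2}.
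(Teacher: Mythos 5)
The paper does not prove this theorem itself; it is quoted verbatim from \cite{ern2007} (Lemma 3.2 and Theorem 3.1 there), so the comparison below is against that standard argument. Your Part 1 contains a genuine gap which then propagates into Part 2. The estimate $\mu_0\|v\|_L\le\|Tv\|_L$ cannot be obtained by proving it on $\mathcal{D}$ and passing to the limit, because $\mathcal{D}$ is \emph{not} dense in $W$ for the graph norm: its graph-norm closure lies in $\mathcal{N}(B)$ (if $\mathcal{D}$ were graph-dense in $W$, the form $\langle Bu,v\rangle=(Tu,v)_L-(u,\tilde Tv)_L$, which is graph-continuous and vanishes on $\mathcal{D}\times\mathcal{D}$, would vanish identically on $W$). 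In fact the inequality is false on all of $W$: for $Tu=u'+u$ on $(0,1)$ one has $T(e^{-x})=0$ with $e^{-x}\neq0$, even though \eqref{p3} holds with $\mu_0=1$. The correct statement (and the one proved in \cite{ern2007}; the paper's ``for all $v\in W$'' appears to be a slip) restricts the first bound to $v\in V$ and the second to $v\in V^*$, and the proof must pass through the identity $2(Tv,v)_L=((T+\tilde T)v,v)_L+\langle Bv,v\rangle_{W'\times W}$, valid on $W$ by \eqref{bterm}, using $Bv=Mv$ on $V$ together with \eqref{pp1} to discard the boundary term (and $Bv=-M^*v$ on $V^*$ for the adjoint bound). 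That your Part 1 never invokes \eqref{pp1} or the spaces $V,V^*$ is the telltale sign that the boundary contribution has been lost.

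Consequently the final step of your Part 2 also fails as written: having shown $g\in W$ with $\tilde Tg=0$ distributionally, you apply ``Part 1 for $\tilde T$'' to conclude $g=0$, but the corrected Part 1 gives $\mu_0\|g\|_L\le\|\tilde Tg\|_L$ only for $g\in V^*$, and membership in $V^*$ is precisely what is missing (again, $e^{x}$ satisfies $\tilde Tv=0$ on $(0,1)$ and lies in $W\setminus V^*$). The missing step, which is the heart of the argument in \cite{ern2007}, is to prove $(B+M^*)g=0$: from $0=(Tu,g)_L=(u,\tilde Tg)_L+\langle Bu,g\rangle$ and self-adjointness of $B$ one gets $\langle Bg,u\rangle=0$ for all $u\in V$, and then splitting an arbitrary $w\in W$ via \eqref{pp2} into components in $\mathcal{N}(B-M)$ and $\mathcal{N}(B+M)$ yields $\langle(B+M^*)g,w\rangle=0$; only then may the $V^*$-coercivity be invoked. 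Your instinct that $\mathcal{D}\subseteq V$ needs justification is sound ($B|_{\mathcal{D}}=0$ is immediate from the distributional definition of $\tilde T$ on $W$, but the inclusion $\mathcal{N}(B)\subseteq\mathcal{N}(M)$ still has to be extracted from \eqref{pp1}--\eqref{pp2}), and the proposed fallback via a Banach--Ne\v{c}as--Babu\v{s}ka inf-sup argument does not evade the issue, since its second condition is exactly the adjoint-injectivity statement whose proof requires the $V^*$-localization above.
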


\subsection{First Order PDEs of Friedrichs Type} \label{PDE-system}
As a typical application of the above framework, we restrict  $L$ to be the space of square integral (vector-valued) functions over an open and bounded domain $\Omega\subset\mathbb{R}^d$ with Lipschitz boundary, $\mathcal{D}$ to be the space of test functions, and $T$ to be a first-order differential operator with its formal adjoint $\tilde{T}$. In particular, we take  $L=[L^2(\Omega)]^r$, $r\in \mathbb{N}$ and $\mathcal{D}=[C_0^{\infty} (\Omega)]^r$.   $\mathcal{D}$ is thus dense in $L$.
Consider $T: \mathcal{D} \to L$ as follows
\begin{equation}\label{eqn}
T\bm{u}=\sum_{k=1}^d \bm{A}_k\partial_k \bm{u}+\bm{C}\bm{u}=\bm{f}, \quad \forall\, \bm{u}\in \mathcal{D}.
\end{equation}
The  standard assumptions are imposed on $\bm{A}_k$ and $\bm{C}$ for Friedrichs' system   \cite{ern200601,ern200602,Friedrichs1958}:

\begin{eqnarray}\label{pa}
\bm{C}&\in & [L^{\infty}(\Omega)]^{r\times r},\\
\bm{A}_k & \in &[L^{\infty}(\Omega)]^{r\times r},k=1,\cdots,d \quad \text{and}\quad \sum_{k=1}^d\partial_k\bm{A}_k\in [L^{\infty}(\Omega)]^{r\times r}\label{pb}\\
\bm{A}_k&=&\bm{A}_k^{\intercal},\quad  \text{a. e. in} \  \Omega,\; k=1,\cdots, d. \label{pc}
\end{eqnarray}
The formal adjoint $\tilde{T}: \mathcal{D}\to L$ of $T$ can be defined by
\begin{equation}
\label{adjoint}
    \tilde{T}\bm{u}=-\sum_{k=1}^d \bm{A}_k\partial_k\bm{u}+(\bm{C}^{\intercal}-\sum_{k=1}^d\partial_k\bm{A}_k)\bm{u}, \quad \forall\, \bm{u}\in \mathcal{D}.
\end{equation}

It is easy to see that $T$ and $\tilde{T}$ satisfy \eqref{p1}-\eqref{p2}. All the
results in this section hold true for Friedrichs' system satisfying \eqref{pa}-\eqref{pc}.

For an abstract Friedrichs' system, one may find the explicit representation of $B$, but it is very difficult to derive the operator on $M$ which is governed by the conditions \eqref{pp1} and \eqref{pp2}. Assume $\mathcal{B}=\sum_{k=1}^d n_k\bm{A}_k$
is well-defined a.e. on $\partial \Omega$ where $\bm{n}=(n_1,\cdots,n_d)^{\intercal}$ is the unit outward normal vector of $\partial \Omega$. For simplicity of notations, we set $\mathcal{H}^s=[H^s]^r$ with $H^s$ being the usual Sobolev space of order $s$, and $\mathcal{C}^1=[C^1]^r$ with $C^1$ being the space of continuously differentiable
functions, {similarly notate $\mathcal C_0^{\infty} = [C_0^{\infty}]^r$.}

\begin{lemma}\cite{Anthony2009book,Jensen2004}
For $\bm{u}, \bm{v}\in \mathcal{H}^1(\Omega)\subset W(\Omega)$, there holds
$$
\langle B\bm{u}, \bm{v}\rangle_{W'(\Omega)\times W(\Omega)}=\langle \mathcal{B}\bm{u},\bm{v}\rangle_{\mathcal{H}^{-\frac{1}{2}}(\partial\Omega)\times \mathcal{H}^{\frac{1}{2}}(\partial\Omega)},
$$
where $ W(\Omega)= \{\bm u \in L(\Omega);\; T \bm u \in L(\Omega)\}$ and $W'(\Omega)$ is the dual space of $W(\Omega)$. Specifically, $\langle B\bm{u}, \bm{v}\rangle_{W'(\Omega)\times W(\Omega)}=\int_{\partial \Omega} \bm{v}^{\intercal}\mathcal{B}\bm{u} ds$, for any $\bm{u}, \bm{v}\in \mathcal C_0^{\infty} (\mathbb{R}^d)$.
\end{lemma}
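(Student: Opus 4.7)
The plan is to start from the very definition of $B$ given in \eqref{bterm}, namely $\langle B\bm u,\bm v\rangle_{W'\times W}=(T\bm u,\bm v)_L-(\bm u,\tilde T\bm v)_L$, and evaluate the right-hand side explicitly when $\bm u,\bm v$ are smooth enough for classical calculus. Inserting the formulas $T\bm u=\sum_k \bm A_k\partial_k\bm u+\bm C\bm u$ from \eqref{eqn} and $\tilde T\bm v=-\sum_k\bm A_k\partial_k\bm v+(\bm C^{\intercal}-\sum_k\partial_k\bm A_k)\bm v$ from \eqref{adjoint} converts the difference into an algebraic/integration-by-parts computation. The two contributions involving $\bm C$ cancel because $\bm v^{\intercal}\bm C\bm u=\bm u^{\intercal}\bm C^{\intercal}\bm v$, so the task reduces to analyzing $\sum_k\int_\Omega \bm v^{\intercal}\bm A_k\partial_k\bm u\,dx+\sum_k\int_\Omega \bm u^{\intercal}\bm A_k\partial_k\bm v\,dx+\sum_k\int_\Omega \bm u^{\intercal}(\partial_k\bm A_k)\bm v\,dx$.

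Next, for $\bm u,\bm v\in C_0^\infty(\mathbb R^d)$ I would invoke the classical divergence theorem coordinate-wise,
\[
\int_\Omega \partial_k\bigl(\bm v^{\intercal}\bm A_k\bm u\bigr)\,dx=\int_{\partial\Omega} n_k\,\bm v^{\intercal}\bm A_k\bm u\,ds.
\]
Expanding the integrand by the product rule and using the symmetry $\bm A_k=\bm A_k^{\intercal}$ from \eqref{pc} (which also yields $(\partial_k\bm A_k)^{\intercal}=\partial_k\bm A_k$, so that $\bm v^{\intercal}(\partial_k\bm A_k)\bm u=\bm u^{\intercal}(\partial_k\bm A_k)\bm v$) exactly reproduces the three bulk terms above. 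Summing over $k$ converts the boundary integrand to $\bm v^{\intercal}\mathcal B\bm u$ with $\mathcal B=\sum_k n_k\bm A_k$, giving the pointwise identity $\langle B\bm u,\bm v\rangle_{W'\times W}=\int_{\partial\Omega}\bm v^{\intercal}\mathcal B\bm u\,ds$. This settles the second (``Specifically'') assertion of the lemma.

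To upgrade from $C_0^\infty(\mathbb R^d)$ to $\mathcal H^1(\Omega)\subset W(\Omega)$, I would combine the density of $C^\infty(\overline\Omega)$-valued restrictions in $\mathcal H^1(\Omega)$ (Lipschitz domain) with the continuity of the trace map $\gamma:\mathcal H^1(\Omega)\to\mathcal H^{1/2}(\partial\Omega)$ and the fact that $\mathcal B\in [L^\infty(\partial\Omega)]^{r,r}$ acts boundedly on both $\mathcal H^{1/2}(\partial\Omega)$ and its dual $\mathcal H^{-1/2}(\partial\Omega)$. On the left side, $B\in\mathcal L(W,W')$ and $\mathcal H^1(\Omega)\hookrightarrow W(\Omega)$ continuously by \eqref{pb}, so $\langle B\bm u,\bm v\rangle_{W'\times W}$ depends continuously on $(\bm u,\bm v)\in\mathcal H^1(\Omega)\times\mathcal H^1(\Omega)$. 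On the right side, $\gamma\bm u,\gamma\bm v\in\mathcal H^{1/2}(\partial\Omega)$ so the duality pairing $\langle\mathcal B\gamma\bm u,\gamma\bm v\rangle_{\mathcal H^{-1/2}\times\mathcal H^{1/2}}$ reduces for smooth data to the classical integral $\int_{\partial\Omega}\bm v^{\intercal}\mathcal B\bm u\,ds$ and is equally continuous. Density then propagates the identity from $C_0^\infty(\mathbb R^d)$ to $\mathcal H^1(\Omega)$.

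The main obstacle I anticipate is the correct bookkeeping of duality pairings in the low-regularity extension: one must verify that both sides of the claimed identity make sense as \emph{the same} continuous bilinear form on $\mathcal H^1(\Omega)\times\mathcal H^1(\Omega)$, which requires checking that $\mathcal B\gamma\bm u\in\mathcal H^{-1/2}(\partial\Omega)$ (or better, in $L^2(\partial\Omega)$ under the present $L^\infty$ assumptions) and that the abstract pairing $\langle B\bm u,\bm v\rangle_{W'\times W}$ is continuous for the $\mathcal H^1$ topology, not merely the graph-norm topology. Once these continuity estimates are established, the passage to the limit is standard.
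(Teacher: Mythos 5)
Your proposal is essentially correct, but note that the paper does not prove this lemma at all: it is stated with a citation to the references \cite{Anthony:2009,Jensen2004}, and the surrounding text only records the consequence (density of $\mathcal{C}^1(\bar\Omega)$ in $\mathcal H^1(\Omega)$ and in $W$ under the segment property, used to extend the representation to all of $W$). What you have written is the standard argument from those references, and it is sound: the $\bm C$-terms cancel because $\bm v^{\intercal}\bm C\bm u=\bm u^{\intercal}\bm C^{\intercal}\bm v$ is a scalar, the symmetry $\bm A_k=\bm A_k^{\intercal}$ turns the remaining bulk terms into the exact divergence $\sum_k\partial_k(\bm v^{\intercal}\bm A_k\bm u)$, and the divergence theorem produces $\int_{\partial\Omega}\bm v^{\intercal}\mathcal B\bm u\,ds$; the passage to $\mathcal H^1(\Omega)$ by trace continuity and density is the right mechanism and matches what the paper itself invokes immediately after the lemma. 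Two minor points of bookkeeping: first, since hypothesis \eqref{pb} only controls the \emph{sum} $\sum_k\partial_k\bm A_k$ rather than each $\partial_k\bm A_k$ individually, you should apply the divergence theorem to the full vector field $\bigl(\bm v^{\intercal}\bm A_1\bm u,\dots,\bm v^{\intercal}\bm A_d\bm u\bigr)$ and expand the total divergence, rather than integrating by parts coordinate-wise; the algebra is identical but the hypotheses then suffice as stated. Second, your closing concern is well placed and easily discharged: under the $L^\infty$ bound on $\mathcal B$ the product $\mathcal B\gamma\bm u$ lies in $[L^2(\partial\Omega)]^r\subset\mathcal H^{-1/2}(\partial\Omega)$, and $\mathcal H^1(\Omega)\hookrightarrow W(\Omega)$ continuously by \eqref{pa}--\eqref{pb}, so both sides of the identity are continuous bilinear forms on $\mathcal H^1(\Omega)\times\mathcal H^1(\Omega)$ and the density argument closes.
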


If $\Omega$ has segment property \cite{Antonic2009}, $\mathcal{C}^1{ (\overline{\Omega}) }$ is thus dense in $\mathcal{H}^1(\Omega)$  and further is dense in $W(\Omega)$. Therefore, the representation could be uniquely extended to the whole space $W(\Omega)$ in the sense that for any $\bm u\in W(\Omega)$ and $\bm v\in \mathcal{H}^1(\Omega)$,
\begin{equation}
    \langle B\bm u, \bm v\rangle_{W'(\Omega)\times W(\Omega)}=\langle \mathcal{B}\bm u, \bm v\rangle_{\mathcal{H}^{-\frac{1}{2}}(\partial\Omega)\times \jg{\mathcal{H}^{\frac{1}{2}}}(\partial\Omega)}.
\end{equation}

The coercivity condition on $T$ dictated by the positiveness condition on the coefficients $\bm{A}_k$
and $\bm{C}$ \cite{ern200601,ern200602,ern2008} is needed to show the well-posedness of PDEs of Friedrichs type. After some direct manipulation, the abstract coercivity condition \eqref{p3} is equivalent to the following full coercivity for Friedrichs  PDEs:
\begin{equation}\label{coercive}
    \bm{C}+\bm{C}^{\intercal}-\sum_{k=1}^d \partial_k\bm{A}_k\geq 2\mu_0 \bm{I}_r, \quad \text{a.e., in}\ \Omega,
\end{equation}
where $\mu_0$ is a positive constant and $\bm I_r$ is the $r\times r$ identity matrix. If a system does not satisfies the coercivity condition \eqref{coercive} we can introduce a feasible transformation so that the modified system satisfies this condition. In \cite{Bui-Thanh2013}, the authors introduced the so-called partial coercivity condition to study the mathematical theory of the corresponding system.
Readers are referred to \cite{Bui-Thanh2013} for more details.

\subsection{Friedrichs Minimax Formulation}
Throughout this subsection, we assume all the conditions given in Theorem \ref{key-FS} hold true. Recall that $V=\mathcal{N}(B-M)$ and $V^*=\mathcal{N}(B+M^*)$ with $M \in \mathcal{L}(W,W')$ satisfying conditions \eqref{pp1}-\eqref{pp2}. For a given $f\in L$, find the solution $u\in V$ such that
\begin{equation}
\label{PDEproblem}
T u = f,
\end{equation}
or equivalently,
%The solution $u\in V$ of \eqref{PDEproblem} is called the solution of the operator equation \eqref{PDEproblem}.
\begin{equation}\label{weak}
(Tu,v)_{L} = (f, v)_{L},\quad \forall\, v \in L.
\end{equation}
In most cases, $T$ is a differential operator whose action on a function should be understood in the sense of distributions.    $u$ is thus called the weak solution of the primal variational equation \eqref{weak}.
We restrict $v\in V^*\subset L$. From \eqref{bterm},
\begin{equation*}
    \begin{split}
 (T{u}, {v})_L=&({u}, \tilde{T}{v})_L+  \langle B{u}, {v}\rangle_{W'\times W}\\
       =&({u}, \tilde{T}{v})_L+  \langle \frac{B-M}{2}u, {v}\rangle_{W'\times W}+  \langle \frac{B+M}{2} u, {v}\rangle_{W'\times W}\\
        =&({u}, \tilde{T}{v})_L+  \langle  u, \frac{B+M^*}{2}  {v}\rangle_{W'\times W}=({u}, \tilde{T}{v})_L,
    \end{split}
\end{equation*}
where we used  $u\in V=\mathcal{N}(B-M)$ and $v\in V^*=\mathcal{N}(B+M^*)$. This, combined with \eqref{weak}, gives
\begin{equation}
\label{weak-form}
({u}, \tilde{T}{v})_L= (f, v)_{L},\quad \forall\, v \in V^*.
\end{equation}
For $u\in V$, \eqref{weak-form}  is equivalent to \eqref{weak}. For $u\in L$   satisfying \eqref{weak-form}, $u$ is called the weak solution of the dual variational equation \eqref{weak-form}.

For $u \in V$, $v \in  V^*$, we define
\begin{equation}
\mathcal{L}(u,v) := \frac{|(u,\tilde Tv)_{L} - (f,v)_{L}|}{\| \tilde T v \|_{L}}.
\end{equation}
According to the estimate \eqref{a-coercive}, we have
\begin{equation*}
|(u,\tilde Tv)_{L} - (f,v)_{L}| \le  \|u \|_L \|\tilde Tv\|_L + \|f\|_L \|v\|_L \le \Big( \|u\|_L + \frac{1}{\mu_0} \|f\|_L\Big) \|\tilde Tv\|_L ,
\end{equation*}
where $\mu_0 $ is given in \eqref{p3}. Therefore, the functional $\mathcal{L}(u,v)$ is bounded with respect to $v \in  V^*$ for a fixed $u\in L$.

Thus we can reformulate the problem  \eqref{PDEproblem} or equivalently the problem \eqref{weak} as the following minimax problem formally:
 \begin{equation}
\label{eqnminmax}
\min_{u \in V} \max_{v \in  V^*} \mathcal{L}(u,v) := \min_{u \in V} \max_{v \in  V^*}\frac{|(u,\tilde Tv)_{L} - (f,v)_{L}|}{\| \tilde T v\|_{L}},
\end{equation}
to identify the weak solution of the primal variational equation \eqref{weak}.

\begin{theorem}
\label{key-thm}
Assume all the conditions given in Theorem \ref{key-FS} hold true. Then $u$ is the unique weak solution of the primal variational equation \eqref{weak} if and only if $u$ is the unique solution that solves the minimax problem \eqref{eqnminmax}.
\end{theorem}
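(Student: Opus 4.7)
The plan is to reduce the equivalence to Theorem~\ref{key-FS}, which provides two facts I will exploit: existence and uniqueness of $\bar u \in V$ with $T\bar u = f$, and the fact that $\tilde T : V^* \to L$ is an isomorphism onto $L$.

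First I would establish the ``only if'' direction. Let $u \in V$ be the unique weak solution of \eqref{weak}, i.e., $Tu = f$ in $L$. Using the integration-by-parts computation that was carried out just before \eqref{weak-form} (which exploits $u \in \mathcal N(B-M)$ and $v \in \mathcal N(B+M^*)$), I get $(u,\tilde Tv)_L = (Tu,v)_L = (f,v)_L$ for every $v \in V^*$. Hence the numerator of $\mathcal L(u,v)$ vanishes identically on $V^*$, so $\max_{v\in V^*}\mathcal L(u,v)=0$, and since $\mathcal L \ge 0$ pointwise this is the global minimum of the outer problem. Thus $u$ solves \eqref{eqnminmax} with optimal value $0$.

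For the ``if'' direction, suppose $u \in V$ solves the minimax problem \eqref{eqnminmax}. By the argument just given, $\bar u \in V$ (the solution of $T\bar u = f$ guaranteed by Theorem~\ref{key-FS}) already yields optimal value $0$, so the optimal value is $0$ and therefore $\max_{v\in V^*}\mathcal L(u,v)=0$. Since $\mathcal L(u,v)\ge 0$ and the denominator $\|\tilde Tv\|_L$ is strictly positive for $v\in V^*\setminus\{0\}$ by the coercivity \eqref{a-coercive}, I conclude that $(u,\tilde Tv)_L - (f,v)_L = 0$ for every $v \in V^*$. Subtracting the corresponding identity for $\bar u$ gives
\begin{equation*}
(u-\bar u,\tilde T v)_L = 0,\qquad \forall\,v\in V^*.
\end{equation*}
Now I invoke the isomorphism $\tilde T : V^* \to L$ from Theorem~\ref{key-FS}: as $v$ ranges over $V^*$, $\tilde Tv$ ranges over all of $L$. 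Hence $u-\bar u$ is $L$-orthogonal to every element of $L$, forcing $u = \bar u$. This simultaneously gives uniqueness of the minimax solution and identifies it with the unique weak solution of \eqref{weak}.

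The step I expect to require the most care is the passage from ``$\max_v \mathcal L(u,v) = 0$'' to ``$(u,\tilde Tv)_L = (f,v)_L$ on all of $V^*$'': one must be sure that the normalization by $\|\tilde Tv\|_L$ does not hide some nontrivial kernel direction. This is exactly where the coercivity estimate \eqref{a-coercive} applied to $\tilde T$ is essential — it guarantees that $\tilde T$ is injective on $V^*$, so the denominator is nonzero off the zero element and the vanishing of the ratio on $V^*\setminus\{0\}$ upgrades to the vanishing of the numerator on all of $V^*$. After that, the surjectivity half of the isomorphism statement completes the uniqueness argument cleanly, and no further computation is needed.
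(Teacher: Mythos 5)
Your proof is correct, and the forward direction coincides with the paper's. The converse, however, takes a genuinely different route. The paper argues: from $\max_{v\in V^*}\mathcal L(u,v)=0$ it gets $(u,\tilde Tv)_L=(f,v)_L$ on $V^*$, moves the adjoint back to obtain $(Tu-f,v)_L=0$ for all $v\in V^*$, and then uses that $\mathcal D\subset V^*$ is dense in $L$ to conclude $Tu=f$, finally citing well-posedness for uniqueness. You instead subtract the identity satisfied by the known solution $\bar u$ and invoke the surjectivity of $\tilde T:V^*\to L$ (part 2 of Theorem \ref{key-FS}) to get $(u-\bar u,w)_L=0$ for all $w\in L$, hence $u=\bar u$. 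Your route has two advantages: it only uses facts explicitly recorded in Theorem \ref{key-FS} rather than the unproved (though true) claim that $\mathcal D\subset V^*$, and it delivers uniqueness of the minimax solution in the same stroke rather than appealing separately to well-posedness. What it gives up is that it identifies $u$ with $\bar u$ rather than verifying directly that $u$ satisfies the primal equation \eqref{weak} tested against all of $L$; this is immaterial since the two characterizations agree. Your attention to the $v=0$ degeneracy of the denominator and to the injectivity of $\tilde T$ on $V^*$ via \eqref{a-coercive} is a point the paper passes over silently, and it is handled correctly.
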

\begin{proof}
On the one hand, if $u\in V$ is a weak solution of \eqref{weak}, we have from \eqref{weak-form} that $\mathcal{L}(u,v)=0$ for all $v \in V^*$. Thus, $u$ is a solution to the minimax problem \eqref{eqnminmax}.

On the other hand, if $u$ is a solution of the minimax problem \eqref{eqnminmax}, then
\begin{equation*}
\max_{v \in V^*} \mathcal{L}(u,v) =\max_{v \in V^*} \frac{|(u,\tilde Tv)_{L} - (f,v)_{L}|}{\| \tilde T v\|_{L}} = 0.
\end{equation*}
Thus, we have $\mathcal{L}(u,v) = 0 $ for all $v \in V^*$. This implies
\begin{equation*}
(u,\tilde T v)_{L} - (f,v)_{L} = 0,\quad \forall \ v \in  V^*.
\end{equation*}
Since $u$ is in $V$, the above equation gives
\begin{equation*}
(Tu-f,v)_{L} = 0,\quad \forall \ v \in V^*.
\end{equation*}
Observing that ${\mathcal D}$ belongs to $V^*$ and is dense in $L$, the above equation implies that $u$ is a weak solution of the primal variational equation \eqref{weak}.

Finally, under the conditions given in Theorem \ref{key-FS}, it is well known that the weak solution $u$ of the primal variational equation \eqref{weak} exists and is unique. This completes the proof of this theorem.

%One the other hand, we will claim that $u$ is the only solution of problem \eqref{eqnminmax}. To this end, we assume $u' \neq u$ is another solution of the minimax problem \eqref{eqnminmax}, and hence
%\begin{equation*}
%\max_{v \in V^*} \mathcal{L}(u',v) =\max_{v \in V^*} \frac{|(u',\tilde Tv)_{L} - (f,v)_{L}|}{\| \tilde T v\|_{L}} = 0.
%\end{equation*}
%Thus, we have $\mathcal{L}(u',v) = 0 $ for all $v \in V^*$. This implies
%\begin{equation*}
%(u',\tilde T v)_{L} - (f,v)_{L} = 0,\quad \forall \ v \in  V^*,
%\end{equation*}
%which gives
%\begin{equation*}
%(Tu'-f,v)_{L} = 0,\quad \forall \ v \in V^*.
%\end{equation*}
%Since ${\mathcal D}$ belongs to $V^*$ and is dense in $L$, we immediately know $Tu' = f$.

\end{proof}

%We would like to remark that, since ${\mathcal D}$ belongs to $V^*$ and is dense in $L$, we also know $u$ is the unique solution of $Tu = f$ if $u$ is the unique weak solution of the primal variational equation \eqref{weak}.

Note that the above discussion and Theorem \ref{key-thm} are concerned with the weak solution of the primal variational equation \eqref{weak} with a solution $u$ being in $V$. It is also of interest to discuss the weak solution $u$ of the dual variational equation \eqref{weak-form}  with $u$ being in $L$ due to Friedrichs (cf. \cite{Friedrichs1958}). According to similar arguments for proving Theorem \ref{key-thm}, we have the following theorem.

\begin{theorem}
\label{key-thm2}
Assume all the conditions given in Theorem \ref{key-FS} hold true. Then $u$ is a weak solution of the dual variational equation \eqref{weak-form} if and only if $u$ is a solution of the following minimax problem:
\begin{equation}
\label{eqnminmax1}
\min_{u \in L} \max_{v \in  V^*} \mathcal{L}(u,v) =\min_{u \in L} \max_{v \in  V^*} \frac{|(u,\tilde Tv)_{L} - (f,v)_{L}|}{\| \tilde T v\|_{L}}.
\end{equation}
\end{theorem}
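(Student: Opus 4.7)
My plan is to mirror the proof of Theorem \ref{key-thm}, but to be careful about two points that distinguish the dual setting: the minimization is over the larger space $L$ rather than $V$, and the dual weak solution is not asserted to be unique (so I must phrase implications accordingly and avoid invoking $T u$ on elements of $L\setminus V$).

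First I would establish the easy implication. Suppose $u\in L$ is a weak solution of the dual variational equation \eqref{weak-form}. By definition, $(u,\tilde T v)_L - (f,v)_L = 0$ for every $v\in V^*$, so $\mathcal{L}(u,v)=0$ on $V^*$ (interpreting the quotient as $0$ when the numerator vanishes, and noting that $\tilde T v = 0$ forces $v=0$ by the coercivity bound \eqref{a-coercive} applied to $\tilde T$). Hence $\max_{v\in V^*}\mathcal{L}(u,v)=0$. To conclude that $u$ actually attains the minimax, I need to know that the value of the minimax is $0$. For this I invoke Theorem \ref{key-FS}: there exists a (unique) $u^\star\in V\subset L$ with $Tu^\star=f$, and from \eqref{weak-form} combined with the derivation preceding it we get $(u^\star,\tilde T v)_L=(f,v)_L$ for all $v\in V^*$, i.e.\ $\max_{v\in V^*}\mathcal{L}(u^\star,v)=0$. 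Since $\mathcal{L}\ge 0$, the minimax value equals $0$, and therefore $u$ is indeed a minimizer.

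For the converse, suppose $u\in L$ solves the minimax problem \eqref{eqnminmax1}. By the preceding paragraph, the minimax value is $0$, so
\[
\max_{v\in V^*}\frac{|(u,\tilde T v)_L-(f,v)_L|}{\|\tilde T v\|_L}=0,
\]
which forces $(u,\tilde T v)_L-(f,v)_L=0$ for every $v\in V^*$ with $\tilde T v\neq 0$, and hence for every $v\in V^*$ (by the coercivity estimate \eqref{a-coercive}, $\tilde T v=0$ implies $v=0$, in which case the identity is trivial). This is exactly \eqref{weak-form}, so $u$ is a weak solution of the dual variational equation.

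The main obstacle, if any, is just conceptual: unlike in Theorem \ref{key-thm} I cannot push the equation further to $(Tu-f,v)_L=0$ because $u$ need not lie in $W$, so $Tu$ is not in general defined as an element of $L$. Consequently I must stop at \eqref{weak-form} and not attempt to recover the strong form, and I must also avoid claiming uniqueness of $u$ — the statement of Theorem \ref{key-thm2} correctly says ``a weak solution'' and ``a solution,'' reflecting that the dual problem may admit several $L$-solutions (all of which differ from the primal solution $u^\star\in V$ by elements of $L$ annihilating $\tilde T$ on $V^*$).
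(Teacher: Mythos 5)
Your proposal is correct and follows essentially the same route the paper intends: the paper gives no separate proof of Theorem \ref{key-thm2}, stating only that it follows ``according to similar arguments for proving Theorem \ref{key-thm}'', and your argument is exactly that adaptation. The two refinements you add --- pinning the minimax value to zero via the primal solution $u^\star\in V\subset L$ from Theorem \ref{key-FS}, and stopping at \eqref{weak-form} rather than passing to $(Tu-f,v)_L=0$ since $u\in L$ need not lie in $W$ --- are precisely the details the paper leaves implicit, and you handle them correctly.
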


Note that the weak solution of the dual variational equation \eqref{weak-form} in $L$ may not be unique, which is also true for the minimax problem \eqref{eqnminmax1}. However, their solution is unique for Friedrichs' system mentioned in the Subsection \ref{PDE-system}, due to the equivalence between the weak solution and the strong solution (cf. \cite{Friedrichs1958}). In this case, the two problems \eqref{eqnminmax} and \eqref{eqnminmax1} are equivalent.

Theorems \ref{key-thm}-\ref{key-thm2} have covered various interesting equations in real applications. However, we would like to mention that Friedrichs learning can be extended to a more general setting, e.g., $u\in L$ but the data function $f$ in $Tu=f$ is not necessarily in $L$. Since the solution space $L$ is more generic than $V$ including solutions with discontinuity, this setting has a wide range of applications in fluid mechanics. We will show this application by a numerical example for the advection-reaction problem in Section 5. Theoretical analysis for more general cases is left as future work.

\section{Examples of PDEs and the Corresponding Minimax Formulation}\label{sec:EX}
Using the abstract framework and the minimax formulation developed in Section \ref{sec:FL}, we will derive the minimax formulations for several typical PDEs. From now on, we will denote by $(\cdot,\cdot)_{\Omega}$ the standard $L^2$ inner product, which induces the $L^2$ norm $\|\cdot\|_{\Omega}$. These notations also apply to $L^2$ smooth vector-valued functions.  For simplicity, we will focus on PDEs with homogeneous boundary conditions throughout this section.

%In this section, we will provide several concrete examples of Friedrichs' system and the corresponding minimax formulation, which will be solved in Friedrichs learning. A general example of Friedrichs' system is given by
%\begin{equation}
%\label{PDEproblem}
%Tu \triangleq \sum \limits_{i=1}^{d}  {\bf{A}}_i\frac{\partial }{\partial x_i}(u) + {\bf{C}}u=f.
%\end{equation}
%For simplicity, we consider real-valued problems. Then its conjugate operator is
%\begin{equation}
%\label{PDEproblemc}
%\tilde Tv \triangleq - \sum \limits_{i=1}^{d} \Big( {\bf{A}}_i\frac{\partial }{\partial x_i}(v) + \frac{\partial}{\partial x_i}{\bf{A}}_iv\Big)+ {\bf{C}}^Tv.
%\end{equation}
%We consider Friedrichs' PDEs with the full coercivity in Section \ref{sec:full}, e.g.,
%\begin{equation}
%(\exists \mu_0 > 0) \ \  {\bf{C}} + {\bf{C}}^T - \sum \limits_{k=1}^d{\partial_k {\bf{A}}_k} \ge 2 \mu_0 {\bf{I}} \ \ \rm{(a.e. \ on \ \Omega)}. \label{__Acond21}
%\end{equation}
\subsection{Advection-Reaction Equation}
The advection-reaction equation seeks $u$ such that
\begin{equation}\label{advection}
\mu u +\bm{\beta} \cdot \nabla u = f,
\end{equation}
where  $\bm{\beta}=(\beta_1, \cdots, \beta_d)^{\intercal} \in [L^{\infty}(\Omega)]^d$ , $\nabla \cdot \bm{\beta} \in L^{\infty}(\Omega)$, $\mu \in L^{\infty}(\Omega)$ and $f\in L^2(\Omega)$. Compared with \eqref{eqn}, \eqref{advection} is a Friedrichs' system by setting ${\bm{A}}_k = \beta_k$ for $k = 1,2,...,d$ and ${\bm{C}} = \mu$.

We assume there exists $\mu_0 > 0$ such that
\begin{equation}
\label{restriction}
\mu(\bm{x}) - \frac{1}{2}\nabla \cdot \bm{\beta}(\bm{x}) \ge \mu_0 > 0, \rm{\ \ a.e. \ in}\  \Omega.
\end{equation}
Thus, the full coercivity condition in \eqref{coercive} holds true. The graph space $W$ given by  \eqref{eqn:graph} is
\begin{equation*}
W = \{w \in L^2(\Omega);\; \bm{\beta} \cdot \nabla w \in L^2(\Omega)\}.
\end{equation*}
We define the inflow and outflow boundary for the advection-reaction equation \eqref{advection}:
\begin{equation}\label{bound}
\partial \Omega^{-} = \{\bm{x} \in \partial \Omega; \bm{\beta}(\bm{x}) \cdot \bm{n}(\bm{x}) < 0\}, \ \ \partial \Omega^{+} = \{\bm{x} \in \partial \Omega; \bm{\beta}(\bm{x}) \cdot \bm{n}(\bm{x})  > 0\}.
\end{equation}
To enforce boundary conditions, we choose from the physical interpretation that
\begin{equation}
 \label{coneboundary}
V   =  \{v \in W; v|_{\partial \Omega^{-}} = 0\},\quad
V^{*}  =  \{v \in W; v|_{\partial \Omega^{+}} = 0\}.
\end{equation}
In this case, it is easy to check that the conditions \eqref{pp1} and \eqref{pp2} hold true.
%In the case of boundary conditions given by \eqref{selection}, we can set $M$ such that
%\begin{equation}
%Mu|_{\partial \Omega} = \left| \bm{\beta} \cdot \bm{n} %\right|u|_{\partial \Omega},
%\end{equation}
%which is equivalent to \eqref{coneboundary}.
By \eqref{eqn},
\begin{equation*}
\tilde Tv = - \sum \limits_{i=1}^{d} \Big( \beta_i \frac{\partial v}{\partial x_i}  + \frac{\partial}{\partial x_i}\beta_iv\Big)+ {\bm{C}}^{\intercal}v = -\bm{\beta} \cdot \nabla v - (\nabla \cdot \bm{\beta}) v + \mu v.
\end{equation*}
The minimax problem is thus given as follows
\begin{equation*}
\min_{u \in V}\max_{v \in V^*}\mathcal{L}(u,v) = \min_{u \in V}\max_{v \in V^*}\frac{|\big(u,-(\bm{\beta} \cdot \nabla v + (\nabla \cdot \bm{\beta}) v) + \mu v \big)_{\Omega} - (f,v)_{\Omega}|}{\left\| \bm{\beta} \cdot \nabla v + (\nabla \cdot \bm{\beta}) v - \mu v\right\|_{\Omega}}.
\end{equation*}
Note that if the coercivity condition \eqref{restriction} does not hold true, we can introduce a transformation $u=e^{\lambda_0 t}\tilde{u}$, so that the advection-reaction equation \eqref{advection} in $\tilde{u}$ satisfies \eqref{restriction} for sufficiently large constant $\lambda_0>0$.

\subsection{Scalar Elliptic PDEs}
\label{sec:elliptic}
 Consider the second-order PDE  to find $u$ satisfying
\begin{equation}
\label{eqn:elliptic}
-\Delta u + \mu u =f, \quad \text{in}\ \Omega,
\end{equation}
where $\Omega\subset \mathbb{R}^d$, $\mu$ $\in L^{\infty}(\Omega)$ is positive and uniformly bounded away from zero, $f \in L^2(\Omega)$. This PDE can be rewritten into a first-order PDE system  by introducing an auxiliary function $\bm v$; i.e.,
\begin{equation*}
\bm v + \nabla u =0, \qquad
\mu u + \nabla \cdot \bm v = f.
\end{equation*}
This first order system could be formulated into a Friedrichs' system with $r = d+1$. The Hilbert space $L$ is chosen as $L = [L^2(\Omega)] ^r$. Let $\tilde {\bm u} = (\bm v^{\intercal},u)^{\intercal} \in L$. For $ k =1,2,...,d$,
$
{\bm{A}}_k =
\left[ {\begin{array}{*{20}{c}}
{\bm 0}&{\bm e^k}\\
{(\bm e^k)^{\intercal}}&{\bm 0}
\end{array}} \right], \ \
{\bm{C}} = \left[ {\begin{array}{*{20}{c}}
{{\bm{I}}_d}&{\bm 0}\\
{\bm 0}&{\mu}
\end{array}} \right],
$
where $\bm e^k$ is the $k$-th canonical basis of $\mathbb{R}^d$. Since $\mu > 0$ and has a lower bound away from zero, the full coercivity condition \eqref{coercive} is satisfied. The graph space is
\begin{equation*}
W = H({\rm div}; \Omega) \times H^1(\Omega).
\end{equation*}
One possible choice of the  Dirichlet boundary condition is as follows
\begin{equation}
V = V^* = H({\rm{div}}; \Omega) \times H_0^1(\Omega) = \{(\bm v^{\intercal},u)^{\intercal}\in W;\; u|_{\partial \Omega } = 0\}.
\end{equation}
The choices of boundary conditions are not unique, obviously. By introducing auxiliary variables, the second-order linear PDE can be reformulated into a first-order PDE system. Finally, the weak solution of \eqref{eqn:elliptic} can be found by solving the equivalent minimax problem in \eqref{eqnminmax}.

Denote the test function by $\boldsymbol{\psi} = (\psi_{\bm v}^{
\intercal}, \psi_u)^{\intercal}$ in the space $V^*$. The minimax problem can be presented as
\begin{equation*}
\label{lossellip}
\min_{ \tilde {\bm u} \in V}\max_{\boldsymbol{\psi} \in V^*}\mathcal{L}(\tilde {\bm u},\boldsymbol{\psi}) = \min_{ \tilde {\bm u} \in V}\max_{\boldsymbol{\psi} \in V^*}\frac{|\big(-\bm v, \psi_{\bm v} - \nabla \psi_u \big)_{\Omega} + (u,\mu\psi_u -\nabla \cdot \psi_{\bm v})_{\Omega}- (f,\psi_u)_{\Omega}|}{\left\|((\psi_{\bm v}-\nabla \psi_u)^{\intercal}, \mu \psi_u - \nabla \cdot \psi_{\bm v})^{\intercal} \right\|_{\Omega}}.
\end{equation*}
To reduce the computational cost, we will reformulate the above formulation into a minimax problem in a primal form. To this end, letting $\psi_{\bm v}=\nabla \psi_u$, and noting that $\tilde {\bm u}=((\nabla u)^{\intercal}, u)^{\intercal}$, we have by a direct manipulation that    %\eqref{lossellip} can be simplified as
\[
\mathcal{L}(\tilde {\bm u},\boldsymbol{\psi})=\frac{| (u, \mu\psi_u -\Delta \psi_u)_{\Omega}- (f,\psi_u)_{\Omega}|}{\left\|\mu\psi_u-\Delta \psi_u\right\|_{\Omega}},
\]
which induces the following minimax problem
\begin{equation}
\label{loss:elliptic}
\min_{u\in H_0^1(\Omega)}\max_{\psi_u\in H_0^1(\Omega)} \mathcal{L}(u,\psi_u)=\min_{u\in H_0^1(\Omega)}\max_{\psi_u\in H_0^1(\Omega)}\frac{| (u, \mu\psi_u -\Delta \psi_u)_{\Omega}- (f,\psi_u)_{\Omega}|}{\left\|\mu\psi_u-\Delta \psi_u\right\|_{\Omega}}.
\end{equation}
In fact, we can derive the above minimax problem in a rigorous way.
From \eqref{eqn:elliptic}, we have
$$(-\Delta u + \mu u, \psi_u)_{\Omega} =(f, \psi_u)_{\Omega}, \qquad \forall\, \psi_u\in H_0^1(\Omega),$$
which, from the usual integration by parts twice, gives
$$(u, \mu\psi_u -\Delta \psi_u)_{\Omega}= (f,\psi_u)_{\Omega}, \qquad \forall\, \psi_u\in H_0^1(\Omega).$$ This will naturally give the minimax problem \eqref{loss:elliptic}.

\subsection{Maxwell's Equation in the Diffusion Regime}
The Maxwell's equations in $\mathbb{R}^3$ in the diffusive regime could be considered as
\begin{equation}
\label{eqn:maxwell}
\mu \bm{H} + \nabla \times \bm{E} = \bm{f},\qquad
\sigma \bm{E} - \nabla \times \bm{H} =\bm{g},
\end{equation}
with $\mu$ and $\sigma$ being two positive functions in $L^{\infty}(\Omega)$ and uniformly bounded away from zero. Three-dimensional functions $\bm{f}  ,\bm{g}$ lie in the space $[L^2(\Omega)]^3$ and the solution functions $(\bm{H}^{\intercal},\bm{E}^{\intercal})^{\intercal}$ are in the space $[L^2(\Omega)]^3 \times [L^2(\Omega)]^3$. In Equation \eqref{eqn}, set $r=6$ and let ${\bm{A}}_k \in \mathbb{R}^{6\times6}$ and $\bm{C}$ be
$
{\bm{A}}_k =
\left[ {\begin{array}{*{20}{c}}
{\bm{0}}&{\bm{\mathcal{R}^k}}\\
{(\bm{\mathcal{R}^k})^{\intercal}}&{\bm{0}}
\end{array}} \right], \ \
{\bm{C}} = \left[ {\begin{array}{*{20}{c}}
{\mu \cdot \bm{I}_3}&{\bm{0}}\\
{\bm{0}}&{\sigma \cdot \bm{I_3}}
\end{array}} \right],
$
for $k =1,2,3$. Here, the entries of $\mathcal{R}_{ij}^k = \rm{sign}(i-j)$  if $i = k+1(\rm{mod}\ 3)$ and  $\mathcal{R}_{ij}^k =0$ otherwise. The graph space is defined as
$
W = H({\rm{curl}}; \Omega) \times H({\rm{curl}}; \Omega).
$
One example of the boundary condition is
$
V = V^* =  H({\rm{curl}}; \Omega) \times H_0({\rm{curl}}; \Omega).
$
The function pair $\bm{u} := (\bm{H}^{\intercal},\bm{E}^{\intercal})^{\intercal} \in W$ is in $V$ whenever $\bm{E} \times {\bm{n}}|_{\partial \Omega} = 0$. Let $\bm{\psi} = (\bm{\psi_H}^{\intercal}, \bm{\psi_E}^{\intercal})^{\intercal}$ be the test function in $V^* $. Then the minimax problem in \eqref{eqnminmax} becomes
\begin{equation}
\label{__lossmaxwell}
\begin{split}
% &\min_{\bm u\in V}\max_{\bm{\psi}\in V^*} \mathcal{L}({\bm{u}},\bm{\psi}) \\
% =&
 \min_{\bm u\in V}\max_{\bm{\psi}\in V^*}  \frac{|\big(\bm{H}, - \nabla \times \bm{\psi_E} + \mu \bm{\psi_H} \big)_{\Omega} + \big(\bm{E},  \nabla \times \bm{\psi_H} + \sigma \bm{\psi_E} \big)_{\Omega}- (\bm{f},\bm{\psi_H})_{\Omega} - (\bm{g},\bm{\psi_E})_{\Omega} |}{\left\|((-\nabla \times \bm{\psi_E} +\mu \bm{\psi_H})^{\intercal}, (\nabla \times \bm{\psi_H} +\sigma \bm{\psi_E})^{\intercal})^{\intercal}\right\|_{\Omega}}.
\end{split}
\end{equation}

\section{Deep Learning-Based Solver}\label{sec:DL}
To complete the introduction of Friedrichs learning, we introduce a deep learning-based method to solve the minimax optimization in \eqref{eqnminmax} or \eqref{eqnminmax1} for the weak solution of \eqref{PDEproblem} or \eqref{weak-form} in this section. For simplicity, we will focus on the minimax optimization \eqref{eqnminmax} to identify the weak solution of \eqref{PDEproblem}.

\subsection{Overview}
In the deep learning-based method, one solution DNN, $\phi_s(\Bx;\theta_s)$, is applied to parametrize the weak solution $u$ in \eqref{eqnminmax} and another test DNN, $\phi_t(\Bx;\theta_t)$, is used to parametrize the test function $\psi$ in \eqref{eqnminmax}. Here, $\theta_s$ and $\theta_t$ are the parameters to be identified such that

\begin{equation}
\begin{split}
\label{_minmaxe}
(\bar \theta_s,\bar\theta_t) &= \arg \min_{\theta_s}\max_{\theta_t} L(\phi_s(\Bx;\theta_s),\phi_t(\Bx;\theta_t) )\\&= \arg \min_{\theta_s}\max_{\theta_t} \frac{|(\phi_s(\Bx;\theta_s),\tilde T \phi_t(\Bx;\theta_t) )_{\Omega} - (f,\phi_t(\Bx;\theta_t) )_{\Omega}|}{\| \tilde T \phi_t(\Bx;\theta_t)\|_{\Omega}},
\end{split}
\end{equation}
under the constraints
\begin{equation*}
\phi_s(\Bx;\theta_s) \in V\text{ and }  \phi_t(\Bx;\theta_t) \in V^*.
\end{equation*}
For simplicity, we use $L(\theta_s,\theta_t)$ for short to represent $L(\phi_s(\Bx;\theta_s),\phi_t(\Bx;\theta_t) )$ from now on.

\subsection{Network Implementation and Approximation Theory}\label{Sec_architecture}
Now, we will introduce the network structures of the solution DNN and test DNN used in the previous section. In this paper, all DNNs  are chosen as ResNet \cite{HeZhangRenSun2016} defined as follows. Let $\phi(\Bx;\theta)$ denote such a network with an input $\Bx$ and parameter  $\theta$, which is defined recursively using a nonlinear activation function $\sigma$ as follows:
 \begin{eqnarray}
 \bm{h}_0=\bm{V}\bm{x},
 \bm{g}_\ell=\sigma(\bm{W}_\ell\bm{h}_{\ell-1}+\bm{b}_{\ell}),
 \bm{h}_\ell=\bm{\bar{U}}_\ell \bm{h}_{\ell-2}+\bm{U}_\ell\bm{g}_\ell, \ell=1,2,\dots,L,
\phi(\bm{x};
 \theta)=\bm{a}^{\intercal}\bm{h}_L,
 \label{10}
 \end{eqnarray}
 where $\bm{V}\in \mathbb{R}^{m\times d}$, $\bm{W}_\ell\in \mathbb{R}^{m\times m}$, $\bm{\bar{U}}_\ell\in \mathbb{R}^{m\times m}$, $\bm{U}_\ell\in \mathbb{R}^{m\times m}$, $\bm{b}_\ell\in \mathbb{R}^{m}$ for $\ell=1,\dots,L$, $\bm{a}\in \mathbb{R}^{m}$, $\bm{h}_{-1}=\bm{0}$. Throughout this paper, $\bm{U}_\ell$ is set as an identity matrix in the numerical implementation of ResNets for the purpose of simplicity. Furthermore, as used in \cite{E2018}, we set $\bm{\bar{U}}_\ell $ as the identity matrix when $\ell$ is even and set $\bm{\bar{U}}_\ell =\bm{0}$ when $\ell$ is odd, i.e., each ResNet block has two layers of activation functions. $\theta$ consists of all the weights and biases $\{\BW^l,\Bb^l\}_{l=0}^L$. The number $m$ and $L$ are called  the width and the depth of the network, respectively. The activation function $\sigma$ is problem-dependent. For example, if the DNN as a test function is required to be continuously differentiable, the Tanh activation function can be chosen to guarantee that our DNN is in $\mathcal{C}^{\infty}$; if it is desired that $\phi(\Bx;\theta) $ is in the $H^1$ space, the activation function ReLU$(x)$ could be used, where ReLU$(x):=\max\{0,x\}$.

ResNets contain fully connected neural networks (FNNs) as special examples when $\bm{\bar{U}}_\ell =\bm{0}$ and {$U_\ell$} is the identity matrix for all $\ell$. Here, we quote existing approximation theory to briefly justify the application of neural networks as a parametrization tool in this paper. Of particular interest here is the approximation theory for Sobolev spaces $W^{n,p}$ \cite{2020Ingo,2021Ingo,hon2022simultaneous} for numerical PDEs. The following lemma is proved in \cite{2021Ingo} to describe the approximation power of neural networks quantitatively.

\begin{lemma}[Theorem 4.9 of \cite{2021Ingo}]
\label{thm:app1}
Let $d\in\mathbb{N}$, $k\in\mathbb{N}_0$, $n\in\mathbb{N}_{\geq k+1}$, and $1\leq p\leq \infty$.
There exist constants $L$, $C$, and $\tilde{\epsilon}$ such that, for every $\epsilon\in(0,\tilde{\epsilon})$ and every $f\in \{  f\in W^{n,p}((0,1)^d):\|f\|_{W^{n,p}((0,1)^d)}\leq 1\}$, there exist a FNN $\phi$ with at most $L$ layers and nonzero weights at most
\begin{equation}
    M=  \begin{cases}
                        C\epsilon^{-d/(n-k)},\qquad \text{$\max\{0,x\}^a$ activation function,} \\
                        C\epsilon^{-d/(n-k-1)}, \quad\text{\rm{Tanh} activation function,}
                    \end{cases}
\end{equation}
such that
\[
\|\phi-f\|_{W^{k,p}((0,1)^d)}\leq \epsilon.
\]
\end{lemma}

{The approximation theory in Lemma \ref{thm:app1} justifies the application of Tanh, ReLU, and the power of ReLU as activation functions in FNNs to approximate target functions in Friedrichs learning. Since ResNets of depth $L$ and width $m$ contain FNNs of depth $L$ and width $m$ as special cases, Lemma \ref{thm:app1} can also provide a lower bound of the approximation capacity of ResNets to justify the application of ResNets in our numerical examples. Lemma \ref{thm:app1} is asymptotic in the sense that it requires sufficiently large network width and depth. For quantitative results in terms of a finite width and depth, the reader is referred to \cite{hon2022simultaneous}.}

{In theory, the target function space of neural network approximation in Friedrichs learning may be as large as the $L^p$ space, which is not covered by Lemma \ref{thm:app1}. Recently, the approximation capacity of neural networks for $L^p$ spaces has been characterized in \cite{shen2021deep}. } %Let
%\begin{equation*}
%	\sigma_1(x)=\big|x-2\lfloor \tfrac{x+1}{2}\rfloor\big| \quad \text{for any $x\in\mathbb{R}$, \quad where $\lfloor\cdot\rfloor$ is the floor function;}
%\end{equation*}
%\begin{equation*}\label{eq:def:sigma2}
%	\sigma_2(x)=\frac{x}{|x|+1} \quad \text{for any $x\in\mathbb{R}$};
%\end{equation*}
%and
%\begin{equation}\label{eq:def:sigma}
%	\sigma(x)= \left\{ \begin{array}{ll}
%		\sigma_1(x) & \text{for} \  x\in [0,\infty),\\
%		\sigma_2(x) & \text{for} \  x\in (-\infty,0).\\
%	\end{array}\right.
%\end{equation}
%Then it has been proved in \cite{shen2021deep} that:
%\begin{lemma}[Corollary 1.3 in \cite{shen2021deep}]
%\label{thm:app2}
%	 Assume  $m\ge 36d(2d+1)$ and  $L\ge 11$, then the space of FNNs of width $m$ and depth $L$ constructed with the activation function $\sigma(x)$ in \eqref{eq:def:sigma} is dense in $L^p([0,1]^d)$ for $p\in[1,\infty)$.
%\end{lemma}

%{The denseness in Lemma \ref{thm:app2} implies that, for any $\epsilon>0$ and any function $f$ in $L^p([0,1]^d)$, there exists an FNN and, hence, ResNet of width $m=36d(2d+1)$ and depth $L=11$ such that the network can approximate $f$ within an error $\epsilon$. Therefore, neural networks can be applied to parametrize the solution functions in Friedrichs learning efficiently. The result in Lemma \ref{thm:app2} is more advanced than the universal approximation theorems of neural networks developed in the 1990s \cite{Cy}, which do not characterize the size of neural networks in the approximation. Characterizing the network size is the key to understand whether or not it is useful to apply neural networks.}

\subsection{Unconstrained Minimax Problem}
When the domain becomes relatively complex,  the penalty method may be employed to solve the constrained minimax optimization in \eqref{_minmaxe}. For this purpose, we shall introduce a distance to quantify how good the solution DNN is and test how DNN satisfies its constraints. Such a distance is specified according to the boundary conditions. Denote by ${\rm{dist}}(\phi(\Bx;\theta) ,V)$ the distance between a DNN $\phi(\Bx;\theta)$ and a space $V$. Therefore, the penalty terms  of boundary conditions can be written as
{
\begin{equation}
\label{loss:boundary_penalty}
L_b(\theta_s,\theta_t):= \lambda_1 {\rm{dist}}(\phi_s(\Bx;\theta_s) ,V) + \lambda_2 {\rm{dist}}(\phi_t(\Bx;\theta_t) ,V^*),
\end{equation}}
where $\lambda_1$ and $\lambda_2$ are two positive hyper-parameters.
%For instance, when $V$ is the space of functions satisfying Dirichlet boundary conditions, i.e., these functions are equal to given functions $g_d(\Bx)$ on the boundary $\partial\Omega'\subset\partial\Omega$. The distance to $V$ is defined as
%\begin{equation*}
%{\rm{dist}}(\phi_s(\Bx;\theta_s) ,V) = \int_{\partial \Omega'}\big({\phi_s(\Bx;\theta_s)-g_d(\Bx)\big)^2 d\Bx }.
%\end{equation*}
%The approximation of the distance can also be done by sampling on the boundary $\partial \Omega '$ and Monte Carlo algorithm.
Finally, the {constraint minimax problem \eqref{_minmaxe} can be formulated into the} following unconstrained minimax problem
\begin{equation}\label{eqn:loss}
(\bar{\theta}_s,\bar{\theta}_t)=\arg \min_{\theta_s}\max_{\theta_t} \big( L(\theta_s,\theta_t) +L_b(\theta_s,\theta_t)\big),
\end{equation}
%or
%\begin{equation}\label{eqn:loss}
%(\bar{\theta}_s,\bar{\theta}_t)=\arg \min_{\theta_s}\max_{\theta_t} \big( L_i(\theta_s,\theta_t) +L_b(\theta_s,\theta_t)\big)
%\end{equation}
which can be solved to obtain the solution DNN $\phi_s(\Bx;\bar{\theta}_s)$ as the weak solution of the given PDE in \eqref{PDEproblem} by Friedrichs Learning.
\subsection{Special Networks for Different Boundary Conditions}
 \label{sec_DB}

As discussed in \cite{gu2021selectnet,gu2021structure}, it is possible to build special networks to satisfy various boundary conditions automatically, which can simplify the unconstrained optimization \eqref{eqn:loss} into
\begin{equation}\label{eqn:loss1}
(\bar{\theta}_s,\bar{\theta}_t)=\arg \min_{\theta_s}\max_{\theta_t}  L(\theta_s,\theta_t).
\end{equation}
This optimization problem \eqref{eqn:loss1} is easier to solve compared to \eqref{eqn:loss} since two hyperparameters $\lambda_1$ and $\lambda_2$ in \eqref{loss:boundary_penalty} are dropped. Note that for a regular PDE domain, e.g.,  a hypercube or a ball, it is simple to construct such special networks satisfying  various boundary conditions automatically.

Let us take the case of a homogeneous Dirichlet boundary condition   as an example. For other cases, the readers are  referred to \cite{gu2021selectnet,gu2021structure}.
A DNN satisfying the Dirichlet boundary condition
$\psi(\Bx)=g(\Bx)$ on $\partial\Omega$ can be constructed by
$
\phi(\Bx;\theta) = h(\Bx)\hat{\phi}(\Bx;\theta)+b(\Bx),
$
where $\hat{\phi}$ is a generic network as in \eqref{10}, and $h(\Bx)$ is a specifically chosen function such that $h(\Bx)=0$ on $\partial\Omega$,    and $b(\Bx)$ is chosen such that $b(\Bx)=g$ on $\partial\Omega$.
For example, if $\Omega$ is a $d$-dimensional unit ball, then $\phi(\Bx;\theta)$ can take the form
$
\phi(\Bx;\theta) = (|\Bx|^2-1)\hat{\phi}(\Bx;\theta)+b(\Bx).
$
For another example, if $\Omega$ is the $d$-dimensional hyper-cube $[-1,1]^d$, then $\phi(\Bx;\theta)$ can take the form
$
\phi(\Bx;\theta) = \underset{i=1}{\overset{d}{\prod}}(x_i^2-1)\hat{\phi}(\Bx;\theta)+b(\Bx).
$
%If we construct the solution DNN and test DNN satisfying their boundary conditions,   the minimax problem in \eqref{eqn:loss} is reduced to the unconstrained minimax problem
%$$
%(\bar{\theta}_s,\bar{\theta}_t)=\arg \min_{\theta_s}\max_{\theta_t}  L(\theta_s,\theta_t)
%$$
%to identify the solution DNN $\phi_s(\Bx;\bar{\theta}_s)$.

\subsection{Network Training}
Once the solution DNN and test DNN  have been set up, the rest is to train them to solve the minimax problem in \eqref{eqn:loss}. The stochastic gradient descent (SGD) method or its variants (e.g., RMSProp \cite{Hinton2012} and Adam \cite{Kingma2014}) is an efficient tool to solve this problem numerically. Although the convergence of SGD for the minimax problem is still an active research topic \cite{Hassan2018,Daskalakis2018,Srinivasa2017}, empirical success shows that SGD can provide a good approximate solution. The training algorithm and main numerical setup are summarized in Algorithm \ref{Alg}.

In Algorithm \ref{Alg}, the outer iteration loop takes $n$ iterations. Each  inner iteration loop contains $n_s$ steps of $\theta_s$ updates and $n_t$ steps of $\theta_t$ updates.
In each inner iteration for updating $\theta_s$, we generate two new sets of random samples $\{\Bx^1_i\}_{i=1}^{N}\subset \Omega$ and $\{\Bx^2_i\}_{i=1}^{N_b}\subset\partial\Omega$ following uniform distributions. In most of the examples, the Latin Hyper-cube Sampling method is employed to generate random points in order to simulate the distributional characteristics even for the relatively small  number of samples. We define the empirical loss of these training points for the Friedrichs' system \eqref{eqn}  as
\begin{eqnarray}
L_t(\theta_s,\theta_t) := \hat{L}(\theta_s,\theta_t) +\hat{L}_b(\theta_s,\theta_t),
\label{__loss}
\end{eqnarray}
where
$
\hat{L}(\theta_s,\theta_t):=\frac{|\hat{L}_n(\theta_s,\theta_t)|}{\hat{L}_d(\theta_s,\theta_t)}
$
with

\begin{eqnarray*}
\hat{L}_n(\theta_s,\theta_t)&=  & \frac{A(\Omega)}{N_1}\sum_{i=1}^{N_1} \big(\sum_{j=1}^d \frac{\partial}{\partial x_j}(-{\bm{A}}_j  \phi_t(\Bx_i^1;\theta_t)), \phi_s(\Bx_i^1;\theta_s)\big) +\frac{A(\Omega)}{N_1}\sum_{i=1}^{N_1}\big({\bm{C}}^{\intercal}\phi_t(\Bx_i^1;\theta_t),\phi_s(\Bx_i^1;\theta_s)\big)\\
- &\frac{A(\Omega)}{N_1} &\sum_{i=1}^{N_1} \big(f(\Bx_i^1), \phi_t(\Bx_i^1;\theta_t)\big)  + \frac{A(\partial \Omega)}{N_2} \sum_{i=1}^{N_2} \big((\sum_{j=1}^{d} {\bm{A}}_j n_j) \phi_s(\Bx_i^2;\theta_s),\phi_t(\Bx_i^2;\theta_t)\big),
\end{eqnarray*}
\[
\hat{L}_d(\theta_s,\theta_t)=  \frac{A(\Omega)}{N_1}\sum_{i=1}^{N_1} \|\sum_{j=1}^d \frac{\partial}{\partial x_j}(-{\bm{A}}_j  \phi_t(\Bx_i^1;\theta_t))  + {\bm{C^{\intercal}}}\phi_t(\Bx_i^1;\theta_t)\|^2_2,
\]
where $(\cdot,\cdot)$ denotes the inner product of two vectors, $\|\cdot\|_2$ denotes the $2$-norm of vectors, $A(\cdot)$ is denoted as the area or volume of the integral region, $\frac{\partial}{\partial x_j}$ denotes the partial derivative with respect to the $j$-th argument of a function in $\Bx$, and $\{\bm{A}_j\}_{j=1}^d$ has been introduced in Section \ref{PDE-system}. As for the boundary loss, let us take the Dirichlet boundary condition $u(\Bx) = g_d(\Bx)$ as an example. In this case, the boundary loss can be formulated as

$$\hat{L}_b(\theta_s,\theta_t):=\frac{A(\partial \Omega)}{N_2} \sum_{i=1}^{N_2} \|\phi_s(\Bx_i^2,\theta_s)-g_d(\Bx_i^2)\|_2^2.$$

As mentioned in Section \ref{sec_DB}, if the solution DNN and test DNN are both built to satisfy their boundary conditions automatically, $\hat{L}_b(\theta_s,\theta_t)$ is zero.

Next, we compute the gradient of $L_t(\theta_s,\theta_t)$ with respect to $\theta_s$, denoted by $g_s$, which is known as the gradient descent direction.
The gradient is evaluated via the autograd in PyTorch, which is essentially computed by processing a sequence of chain rules since the loss function is the composition of several simple functions with explicit formulas. {For specific classes of PDEs, the computational cost of gradients can be reduced via recent development \cite{Beck2021_2}}. Besides, optimizers will use $g_s$ together with some historical gradient information to output a real descent direction, say $\tilde g_s$. Thus, $\theta_s$ can update along the direction $\tilde g_s$ as $\theta_s\leftarrow\theta_s-\eta_s \tilde g_s$. In each outer iteration of Algorithm \ref{Alg}, we repeatedly sample new training points and update $\theta_s$ for $n_s$ steps.

In each inner iteration, $\theta_t$ can be updated similarly to maximize the empirical loss $L_t(\theta_s,\theta_t)$. In each inner iteration for updating $\theta_t$, we generate random samples and evaluate the gradient of the empirical loss with respect to $\theta_t$, denoted by $g_t$. Similar to the update of $\theta_s$, $\theta_t$ can be updated via one step of ascent with a step size $\eta_t$ as follows:
%\begin{equation*}\label{09}
$\theta_t\leftarrow\theta_t+\eta_t \tilde g_t.
$
%\end{equation*}
In each outer iteration, we repeatedly sample new training points and update $\theta_t$ for $n_t$ steps.

We would like to emphasize that minimax optimization problems are generally more challenging to solve than minimization problems arising in network-based PDE solvers in the strong form. {Note that, when we fix the test DNN $\phi_t(\Bx;\theta_t)$, the loss function in \eqref{_minmaxe} is a convex functional with respect to the solution DNN $\phi_s(\Bx;\theta_s)$, but not with respect to the parameters $\theta_s$ on it}. Hence, the difficulty of the minimization problem when the test DNN is fixed is the same as the network-based least squares method. An appropriate choice of  step size is crucial to improve the solution. Moreover, in the extra step of updating test function DNN for a fixed solution DNN, the maximization problem over the test DNN is not convex {neither in the parameter space nor in the DNN space}, which {makes} the optimization even difficult.

To further facilitate the convergence of Friedrichs learning, a restarting strategy is employed to obtain the restarted Friedrichs learning in Example \ref{subsec: discontinous}, which is in the same spirit as typical restarted iterative solvers in numerical linear algebra, e.g., the restarted GMRES \cite{Joubert1994}, or the restart strategies in optimization \cite{Al-Dujaili2019,Hanada2019,Nouiehed2019,Hu2009RandomRI}. For simplicity and without loss of generality, the restarted Friedrichs learning is introduced for PDEs with Dirichlet boundary conditions. For other boundary conditions, the restarted Friedrichs learning can be designed similarly. {We stress the fact that except for the example in \ref{subsec: discontinous}, the Friedrichs learning algorithm performs well enough without a restarting strategy, so we do not implement the restarting method in the subsequent experiments.}

\begin{algorithm}
\caption{Restarted Friedrichs Learning for Weak Solutions of PDEs. }
\label{Alg}
\begin{algorithmic}
\REQUIRE The desired PDE.
\ENSURE Parameters $\theta_t$ and $\theta_s$ solving the minimax problem in \eqref{eqn:loss}.
\STATE Set iteration parameters $n$, $n_s$, and $n_t$. Set sample size parameters $N_1$ and $N_2$. Set step sizes $\eta_s^{(k)}$ and $\eta_t^{(k)}$ in the $k$-th outer iteration. Set the restart index set $\Theta_s$ and $\Theta_t$.
\STATE Initialize $\phi_s(\Bx;\theta_s^{0,0})$ and $\phi_t(\Bx;\theta_t^{0,0})$.
\FOR{$k=1,\cdots,n$}
\IF{$k\in\Theta_s$}
\STATE Keep a copy $b(\Bx) = \phi_s(\Bx,{\theta}_s^{k-1,0})$ and randomly re-initialized $\theta_s^{k-1,0}$.
\IF{the penalty method for boundary conditions is used}
\STATE Set a new DNN $\phi_s(\Bx,\theta_s^{k-1,0})=\hat{\phi}_s(\Bx,\theta_s^{k-1,0})+b(\Bx)$ with a generic DNN $\hat{\phi}_s(\Bx,\theta_s^{k-1,0})$.
\ELSE
\STATE Set a new DNN $\phi_s(\Bx,\theta_s^{k-1,0})=h(\Bx)\hat{\phi}_s(\Bx,\theta_s^{k-1,0})+b(\Bx)$ with a generic DNN $\hat{\phi}_s(\Bx,\theta_s^{k-1,0})$ and $h(\Bx)$ in \eqref{eqn:b}.
\ENDIF
\ENDIF
\FOR{$j=1,\cdots,n_s$}
\STATE Generate uniformly distributed sample points $\{\Bx^1_i\}_{i=1}^{N_1}\subset \Omega$ and $\{\Bx^2_i\}_{i=1}^{N_2}\subset\partial\Omega$.
\STATE Compute the gradient of the loss function in \eqref{__loss} at the point $(\theta_s^{k-1,j-1},\theta_t^{k-1,0})$ with respect to $\theta_s$ and denote it as $g(\theta_s^{k-1,j-1},\theta_t^{k-1,0})$.
\STATE Update $\theta_s^{k-1,j}\leftarrow\theta_s^{k-1,j-1}-\eta_s^{(k)}g(\theta_s^{k-1,j-1},\theta_t^{k-1,0})$ with a step size $\eta_s^{(k)}$.
\ENDFOR
\STATE $\theta_s^{k,0}\leftarrow\theta_s^{k-1,n_s}$.
\STATE If $k\in\Theta_t$, re-initialize $\theta_t^{k-1,0}$ randomly.
\FOR{$j=1,\cdots,n_t$}
\STATE Generate uniformly distributed sample points $\{\Bx^1_i\}_{i=1}^{N_1}\subset \Omega$ and $\{\Bx^2_i\}_{i=1}^{N_2}\subset\partial\Omega$.
\STATE Compute the gradient of the loss function in \eqref{__loss} at $(\theta_s^{k,0},\theta_t^{k-1,j-1})$ with respect to $\theta_t$ and denote it as $g(\theta_s^{k,0},\theta_t^{k-1,j-1})$.
\STATE Update $\theta_t^{k-1,j}\leftarrow\theta_t^{k-1,j-1}+\eta_t^{(k)}g(\theta_s^{k,0},\theta_t^{k-1,j-1})$ with a step size $\eta_t^{(k)}$.
\ENDFOR
\STATE $\theta_t^{k,0}\leftarrow\theta_t^{k-1,n_t}$.
\IF{Stopping criteria is satisfied}
\STATE Return $\theta_s=\theta_s^{k,0}$ and $\theta_t=\theta_t^{k,0}$.
\ENDIF
\ENDFOR
\end{algorithmic}
\end{algorithm}

\section{Numerical Experiments}\label{sec:RS}

In this section, all hyperparameters are listed in Table \ref{Tab_parameters}. We set the solution DNN $\phi_s(\Bx,\theta_s)$ as a fully connected ResNet with ReLU activation functions, depth $7$, and width $m_s$, where $m_s$ is problem dependent. The activation of $\phi_s(\Bx,\theta_s)$ is chosen as ReLU due to its capacity to approximate functions with low regularity and its good numerical performance. The test DNN $\phi_t(\Bx,\theta_t)$ has the same structure with depth $7$ and width $m_t$. To ensure the smoothness of $\phi_t(\Bx,\theta_t)$,  we employ the Tanh activation function. The optimizers for updating $\phi_s(\Bx,\theta_s)$ and $\phi_t(\Bx,\theta_t)$ are chosen as Adam and RMSProp, respectively. All of our experiments share the same setting for network structures and optimizers.
%Being an adversarial algorithm, we usually need a pre-training phase to force the test function, the ``Discriminator", to form and improve its own structure.
During the pre-training phase, we always set the learning rate to be larger than the following training phase. Thereafter, to ensure an effective and stable training process, the learning rate in the optimization is updated in an exponentially decaying scheme. More precisely, at the $k$-th iteration, we set the learning rate $\eta_s^{(k)} = \eta_s^{(0)}  (\frac{1}{10}) ^{(k/\nu_s)}$ for the solution DNN, where $\eta_s^{(0)}$ is the initial learning rate and $\nu_s $ is the decaying rate. Similarly, we set $\eta_t^{(k)} = \eta_t^{(0)}  (\frac{1}{10}) ^{(k/\nu_t)}$ for test DNN. The codes for reproducing the numerical results are available at \url{https://github.com/SeiruGanki/Friedrich-Learning}.
Throughout this section, special networks satisfying boundary conditions automatically are used to avoid tuning the parameters $\lambda_1$ and $\lambda_2$ in \eqref{loss:boundary_penalty}; the inner iteration numbers are set as $n_s=1$ and $n_t =1$. The values of other parameters listed in Table \ref{Tab_parameters} will be specified later.

\begin{table}
\centering
\begin{tabular}{|c|l|}
\hline
\hline
{\bf{Notation}} & {\bf{Meaning}}  \\
\hline
 \hline
 $d$ & the dimension of the problem \\  \hline
 $n_p$ & the number of pre-training iterations  \\ \hline
 $n$ & the number of outer iterations  \\ \hline
  $\eta_s^{p}$ & the pre-training learning rate for optimizing the solution network \\ \hline
 $\eta_t^{p}$ & the pre-training learning rate for optimizing the test network\\ \hline
 $\eta_s^{(0)}$ & the initial learning rate for optimizing the solution network \\ \hline
 $\eta_t^{(0)}$ & the initial learning rate for optimizing the test network\\ \hline
 $\nu_s$ & the decaying rate for $\eta_s$\\ \hline
 $\nu_t$ & the decaying rate for $\eta_t$\\ \hline
 $m_s$ & the width of each layer in the solution network\\\hline
 $m_t$ & the width of each layer in the test network \\\hline
 $n_s$ & the number of inner iterations for the solution network \\\hline
 $n_t$ & the number of inner iterations for the test network \\\hline
 $N$ & the number of training points inside the domain \\\hline
 $N_b$ & the number of training points on the domain boundary \\\hline
 $\Theta_s$ & the restart index set of the solution network\\ \hline
$\Theta_t$ &the restart index set of the test network\\ \hline
\end{tabular}
\caption{\em Parameters in the model and algorithm.}
\label{Tab_parameters}
\end{table}
To measure the solution accuracy, the following discrete relative $L^2$ error at uniformly distributed test points in the domain is applied; i.e.,
$$
e_{L^2}(\theta_s):=\left(\frac{\underset{i}{\sum}\left\|\phi_s(\Bx_i;\theta_s)-u^*(\Bx_i)\right\|_2^2}{\underset{i}{\sum}\left\|u^*(\Bx_i)\right\|_2^2}\right)^\frac{1}{2},
$$
where $u^*$ is the exact solution. In the case when the true solution is continuous, the following discrete relative $L^\infty$ error at uniformly distributed test points in the domain is also applied; i.e.,
$$
e_{L^\infty}(\theta_s):=\frac{\max_i (\left\|\phi_s(\Bx_i;\theta_s)-u^*(\Bx_i)\right\|_{\infty})}{\max_i (\left\|u^*(\Bx_i)\right\|_{\infty})},
$$
where $\|\cdot\|_\infty$ denotes the $L^\infty$-norm of a vector. In most examples, we choose at least $10,000$ testing points for error evaluation. When the dimension is high or the value of the target function surges, we may choose $50,000$ or even $100,000$ testing points.

\subsection{Advection-Reaction Equation with Plain Discontinuity}
\label{subsec: discontinous}
In the first example, we identify the weak solution in $L^2(\Omega)$ of the advection-reaction equation in \eqref{advection} with discontinuous solutions. Following Example $2$ in \cite{Houston2000}, we choose the velocity $\bm \beta = (1,9/10)^{\intercal}$ and $\mu = 1$ in the domain $\Omega = [-1,1]^2$.
We choose the right-hand-side function $f$ and the boundary function $g$ such that the exact solution is
\begin{equation}
u^*(x,y) =
\left\{ \begin{array}{cl}
\sin(\pi (x+1)^2/4)\sin (\pi(y-\frac{9}{10}x)/2)\ & {\rm{for}} \ -1\le x \le1,\ \frac{9}{10}x<y\le 1,\\
e^{-5(x^2+(y-\frac{9}{10}x)^2)}  &{\rm{for}} \ -1\le x \le1,\ -1\le y< \frac{9}{10}x.\\
\end{array} \right.
\label{discontinuousDG}
\end{equation}
The exact solution is visualized in Figure \ref{3D_disconti_exactslu}. The discontinuity of the initial value function will propagate along the characteristic line $y=9x/10$. Hence, the derivative of the exact solution does not exist along that line. Classical network-based least square algorithms in the strong form will encounter a large residual error near the characteristic line and hence its accuracy may not be very attractive, which motivates our Friedrichs Learning in the weak form.
%For example, see its performance for this test example in Figure \ref{fig_loss_discontinuous_50k} (green line) compared to Friedrich
%Learning(blue line).
%This drawback of the classical method in the strong form motivates our Friedrichs Learning in the weak form.

As discussed in \cite{Houston2000}, a priori knowledge of the characteristic line is crucial for conventional finite element methods with adaptive mesh to obtain high accuracy. In \cite{Houston2000}, the streamline diffusion method (SDFEM) can obtain a solution with $O(10^{-2})$ accuracy using $O(10^4)$ degrees of freedom when the mesh is aligned with the discontinuity, i.e., when the priori knowledge of the characteristic line is used in the mesh generation. The discontinuous Galerkin method (DGFEM) in \cite{Houston2000} can obtain $O(10^{-8})$ accuracy under the same setting. When the mesh is not aligned with the discontinuity, e.g., when the characteristic line is not used in mesh generation, DGFEM converges as slow as SDFEM and the accuracy is not better than $O(10^{-2})$ with $O(10^4)$ degrees of freedom according to the discussion in \cite{Houston2000}.

As a deep learning algorithm, Friedrichs Learning is a mesh-free method and the weak solution can be identified without the priori knowledge of the characteristic line. By the discussion in Section \ref{sec_DB}, a special network $\phi_s(\Bx,\theta_s)$ is constructed as follows to fulfill the boundary condition of the solution:
\begin{equation}
\label{discontinuous_network_autofill}
\phi_s(\Bx,\theta_s) = \cos(-\frac{\pi}{4}+\frac{\pi}{4}x)\cos(-\frac{\pi}{4}+\frac{\pi}{4}y) \hat \phi_s(\Bx,\theta_s) + b(x,y),
\end{equation}
where $b(x,y)$ is constructed directly from the boundary condition as
% $b(x,y)=0$  for  $-1\le x \le1,\ -0.4+x/2<y\le 1$ and $b(x,y)=e^{-5[x^2+(-1-9/10x)^2]} + e^{-5[(-1)^2+(y+9/10)^2]}- e^{-5[(-1)^2+(-1+9/10)^2]}$ for $-1\le x \le1,\ -1\le y \le -0.4+x/2$
\begin{equation}
b(x,y) =
\left\{ \begin{array}{cl}
0,\ & {\rm{for}} \ -1\le x \le1,\ -0.4+x/2<y\le 1,\\
\begin{array}{l}
e^{-5[x^2+(-1-9/10x)^2]} + e^{-5[(-1)^2+(y+9/10)^2]} \\
\quad \quad \quad \quad \quad - e^{-5[(-1)^2+(-1+9/10)^2]},
\end{array}
&{\rm{for}} \ -1\le x \le1,\ -1\le y \le -0.4+x/2,\\
\end{array} \right.
\label{__discontinuousBASE}
\end{equation}
satisfying $b(x,y) = u(x,y)$ on the inflow boundary $\partial \Omega^-$. For test function, we fix its structure so that $\phi_t(\Bx,\theta_t) =0$ on $\partial \Omega^+$ defined in \eqref{bound}.

First of all, the restarting strategy as introduced at the end of section \ref{sec:DL} for pre-training the base function is employed. The special network structure satisfying the Dirichlet boundary conditions for solution DNN $\phi_s$ is constructed as
\begin{equation}
\label{eqn:b}
\phi_s(\Bx;\theta_s) = h(\Bx)\hat{\phi}_s(\Bx;\theta_s)+b(\Bx),
\end{equation}
where $b(\Bx)$ satisfies the boundary condition which also can be regarded as an initial guess; $h(\Bx)=0$ on the Dirichlet boundary. We observe  that if  $b(\Bx)$ is closer to the true solution, it is easier to train a generic DNN $\hat{\phi}_ s$ to obtain the solution DNN $\phi_s$ that approximates the true solution more accurately. Therefore, after a few rounds of outer iterations in the original Friedrichs learning, we obtain  a rough solution DNN, which can be served as a better $b$ function in \eqref{eqn:b} to construct a new solution DNN. After that, we will continue training to obtain a more accurate solution.

Secondly, we choose $b(x,y)$ to be discontinuous along a random line rather than the true discontinuous line of the exact solution. This could be a reasonable reproduction of the real application scenarios. Indeed, our choice of $b(x,y)$ above actually makes the problem more challenging. The true solution is discontinuous along the characteristic line, the blue line in Figure \ref{sample_dis}, and $b(x,y)$ is discontinuous along the orange line in Figure \ref{sample_dis}. Hence, to make the solution DNN $\phi_s$ in \eqref{discontinuous_network_autofill} approximate the true solution well, one algorithm needs to find and correct these two lines automatically and the DNN $\hat{\phi}_s$ in \eqref{discontinuous_network_autofill} should be approximately discontinuous along these two lines. As shown by Figure \ref{2D_disconti_diff}, with Friedrichs learning the solution DNN $\phi_s$ has a configuration similar to the true solution in Figure \ref{3D_disconti_exactslu}, which means that it has successfully learned these two lines. This feature can be significant because no prior knowledge of the discontinuity of the exact solution is needed during the training, as long as the boundary condition is satisfied.
%As you might see in the next experiment, a wrong guess could be fatal to other learning algorithms like DNN-based least square.

Thirdly, we can observe the mechanism of Friedrichs learning from Figure \ref{2D_disconti_test}, where the test DNN $\phi_t$ surges and has a larger magnitude near these two lines to emphasize the error of the solution DNN $\phi_s$. It can make the update of the configuration of $\phi_s$  more focused on these two lines than other places, which in turn facilitates the expected convergence of the solution DNN.

The whole training process can be divided into two phases due to restarting. In Phase I of pre-training, we train a ResNet of width $50$ for $1,000$ outer iterations to get a rough solution with an $L^2$ relative error $2.76e\text{-}1$. All other parameters are shown in Table \ref{tab:discontinous}. As shown in Figure \ref{3D_disconti_basefunc}, the rough solution has already captured basically the shape of the solution. In Phase II of training, we set this rough solution as a base function $b(\Bx)$ and again set up a ResNet of width 150. It is shown that $50,000$ outer iterations are enough to make the $L^2$ error of the solution DNN decrease to $2.27e\text{-}2$, as shown in Figure  \ref{2D_disconti_diff} and Figure \ref{fig_loss_discontinuous_50k}. Our method is comparable with the SDFEM in \cite{Houston2000} considering the same order of degrees of freedom summarized in Table \ref{tab:discontinous}. However, SDFEM in \cite{Houston2000} requires the priori knowledge of the characteristic line while our method does not. Therefore, from the perspective of practical computation, our method would be more convenient in real applications.

 To compare Friedrichs Learning and the  DNN-based least square (LS) algorithm \cite{Dissanayake1994,Lagaris1998,RAISSI2019686}, we conduct comparative experiments with very similar hyper-parameters shown in Table \ref{tab:discontinous-LS}.  After $50,000$ iterations we obtain a solution with the relative error in $L^2$ norm which is $3.29e\text{-}2$ as shown in \ref{fig_loss_discontinuous_50k}. It is worth pointing out that the iteration shown is the outer iteration, and the computation of Friedrichs learning costs about twice as much as the LS approach for each iteration. Though Friedrichs learning is more accurate, the DNN-based least square algorithm and the Friedrichs learning have errors of the same order in this numerical test.

\begin{figure}[htbp]
    \subfigure[The characteristic line (blue) {of the exact solution} and the line (orange) along which $b(x,y)$ in \eqref{discontinuous_network_autofill} is discontinuous.]{
	\begin{minipage}[t]{0.31\textwidth}
		\centering
		\includegraphics[width=0.85\linewidth]{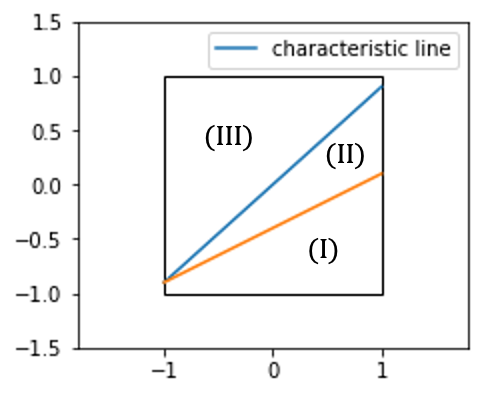}
		%\caption{exact solution $u^{*}(x)$}
        \label{sample_dis}
	\end{minipage}
	}
	\subfigure[Exact solution.]{
	\begin{minipage}[t]{0.31\textwidth}
		\centering
		\includegraphics[width=0.85\linewidth]{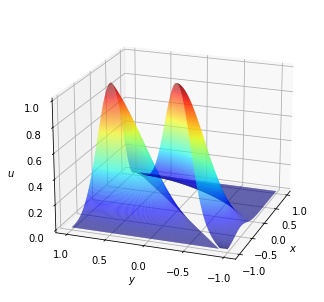}
		%\caption{solution $u(x)$ by GAN}
        \label{3D_disconti_exactslu}
	\end{minipage}
	}
	\subfigure[The solution DNN right before restarting.]{
	\begin{minipage}[t]{0.31\textwidth}
		\centering
		\includegraphics[width=0.85\linewidth]{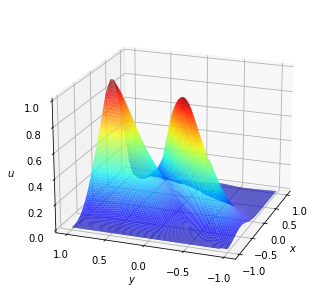}
		%\caption{Comparison of $u^{*}(x)$ and $u(x)$ in 3D view}
        \label{3D_disconti_basefunc}
	\end{minipage}
	}
	\subfigure[The point-wise error of approximate solution at epoch $50,000$ by Friedrichs learning.]{
	\begin{minipage}[t]{0.31\textwidth}
		\centering
		\includegraphics[width=0.85\linewidth]{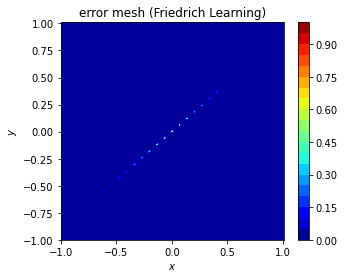}
		%\caption{Comparison of $u^{*}(x)$ and $u(x)$ in 3D view}
        \label{2D_disconti_diff}
	\end{minipage}
	}
   \hfil
	\subfigure[The test DNN value at epoch $50,000$.]{
	\begin{minipage}[t]{0.31\textwidth}
		\centering
		\includegraphics[width=0.85\linewidth]{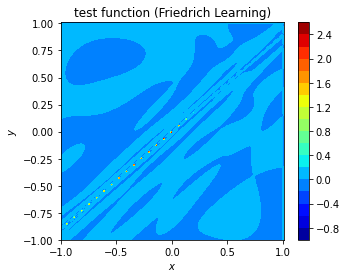}
		%\caption{Comparison of $u^{*}(x)$ and $u(x)$ in 3D view}
        \label{2D_disconti_test}
	\end{minipage}
	}
    \hfil
 	\subfigure[The relative $L^2$ error curve by DNN-based least square and Friedrichs learning ]{
	\begin{minipage}[t]{0.31\textwidth}
		\centering
		\includegraphics[width=0.85\linewidth]{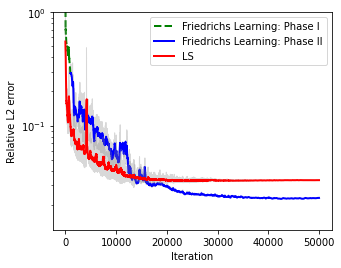}
		%\caption{Comparison of $u^{*}(x)$ and $u(x)$ in 3D view}
        \label{fig_loss_discontinuous_50k}
	\end{minipage}
	}
    \hfil
\centering
\caption{Numerical results of Equation \eqref{advection} when the exact solution is chosen as \eqref{discontinuousDG}. }
\end{figure}

%Figure \ref{2D_DG_approxslu} shows the 3D surface of our solution by Friedrichs learning algorithm, we can see a dramatic jump in the neighborhood of characteristic line. In figure \ref{2D_DG_diff}, we show the difference between our final solution and the exact solutions, and in figure \ref{fig_loss_DG} the relative $L^2$ error in the training are showed. We can find that the error is concentrating around the discontinuous line, which is within our expectations. But it is worth pointing out that, the error occurs only for points very close to the line, that is to say, our approximate solution is almost "discontinuous" on two sides of the characteristic line.

\begin{table}
\centering
\begin{tabular}{|c|c|c|c|c|c|c|}
  \hline
  Parameters & $n$  & $m_s$ & $m_t$ &$N$  & $N_b$ &$\Theta_s$\\ \hline
  Value & $50,000$  & pre-train $ 50$, after $250$  & $150$ &$90,000$ & $45,000$ & $\{1,000\}$\\ \hline
  Parameters  & $\eta_s^{(0)}$ &$ \eta_t^{(0)}$ & $\nu_s$ & $\nu_t$ & parameter number &$\Theta_t$\\\hline
 Value & $3e\text{-}4$ & $3e\text{-}3$ &$9,000$&$9,000$ & $327,700$ &$\varnothing$\\ \hline
\end{tabular}
\vspace{0.25cm}
\caption{\em The parameters for the Friedrichs learning solver of the experiment in Section \ref{subsec: discontinous}.}
\label{tab:discontinous}
\end{table}

\begin{table}
\centering
\begin{tabular}{|c|c|c|c|c|c|}
  \hline
  Parameters &$n$&  $m_s$ & $N$   & $\eta_s^{(0)}$ &$\nu_s$ \\ \hline
  Value &$50,000$ & $250$ & $90,000$   & $1e\text{-}3$ &$10,000$  \\ \hline
\end{tabular}
\vspace{0.25cm}
\caption{\em The parameters of the comparative experiment in Section \ref{subsec: discontinous}.}
\label{tab:discontinous-LS}
\end{table}

\subsection{Advection-Reaction Equation with Curved Discontinuity}
\label{subsec:disconti_wind} Consider a domain $\Omega=\{(x,y)|x^2+y^2\le 1,y\ge 0\}$. The velocity $\beta = (\sin \theta, -\cos \theta)^{\intercal} = (y/\sqrt{x^2+y^2},-x/\sqrt{x^2+y^2})$ with $\theta$ being the polar angle and $\mu = 0$. The Dirichlet boundary condition on the inflow boundary is given as $u(x,0)=1$ for $-1\le x \le -\frac{1}{2}$, $u(x,0)=0$ for $-\frac{1}{2}< x \le 0$. The true solution is
\begin{equation}
\label{discontinuous_wind_exact}
u^*(x,y) = \left\{  \begin{array}{l}
0, \ \ x^2+y^2 < 1/4\\
1, \ \ x^2+y^2 \ge 1/4
\end{array}\right..
\end{equation}

Again, without the prior knowledge of the characteristic line, to create a network satisfying the boundary condition, we choose a solution DNN $\phi_s$ as
\begin{equation}
\label{discontinuous_wind_network_autofill}
\phi_s(\Bx,\theta_s) = \big(\frac{\pi}{2}-\arctan(\frac{-x}{y})\big)\sin(\frac{\pi}{2}r) \hat \phi_s(\Bx,\theta_s) + b(x,y),
\end{equation}
where
$$b(x,y) = \left\{  \begin{array}{l}
0, \ \ x \ge -1/2\\
1, \ \ x < -1/2
\end{array}\right., {\rm{\ \ and}}\ \  r = \sqrt{x^2+y^2}.$$
$\phi_s$ will be applied as the solution network of Friedrichs learning. Similarly, \begin{equation}
\label{discontinuous_wind_network_autofill_test}
\phi_t(\Bx,\theta_t) = \Big(-\frac{\pi}{2}-\arctan(\frac{-x}{y})\Big)\hat \phi_t(\Bx,\theta_t).
\end{equation}
By applying Friedrichs learning with $\phi_s$ and $\phi_t$ as the solution and test DNN, respectively, we get an approximate solution with an $L^2$ relative error
$2.48e\text{-}2$ with the iteration error visualized in Figure \ref{discontinous_wind_project_FL}. Figure \ref{discontinuous_wind_error} shows the point-wise error after 100,000 iterations by Friedrichs learning.
Friedrichs learning can capture the discontinuous locations well with sharp characterization. The test function value is relatively large around the discontinuous place, resulting in a greater weight for samples around there, which can help to obtain a more accurate PDE solution. Our experiments are implemented on the graphic card Nvidia Tesla P100 with CUDA; in this example, for $10,000$ iterations it will take about 50 minutes and cost twice as much as the Least Square methods.

As a comparison with traditional PDE solvers, note that the same PDE was solved by the adaptive least-squares finite element method (LSFEM) in \cite{Liu2020} with the same order of degrees of freedom ($\approx 1.1 \times 10^5$) as in Friedrichs learning. The $L^2$ relative error of LSFEM is $4.59e\text{-}2$, which is larger than the one by Friedrichs learning. We would like to emphasize that LSFEM in \cite{Liu2020} has applied extra computational resources to adaptively generate discretization mesh, without which the error would be poorer. Besides, the DGFEM\footnote{Available at \url{https://github.com/dealii/dealii}.} with adaptive mesh is also applied to solve the same PDE with the same order of degrees of freedom (107,332) as in Friedrichs learning. The $L^2$ relative error of DGFEM is $2.05e\text{-}2$, which is very similar to the error by Friedrichs learning. Following the idea in \cite{Liu2020} to visualize the solution, we project the approximate solutions by DGFEM and Friedrichs learning to the radius axis in Figure \ref{discontinous_wind_project_FL} and plot the scatters corresponding to the angle $\theta$ ranging from $0$ to $\pi$, the points chosen is the same as DGFEM following the software built-in functions. This visualization makes it easier to compare the solutions near the discontinuous location. It is easy to see that the solution by DGFEM has a larger error than the one by Friedrichs learning near the discontinuous location.

DNN-based least square is also applied to solve the same problem as a comparison. Two options of DNN-based least square are tested: one with $\phi_s$ as the solution network so that there is no penalty terms to enforce the boundary condition in the loss function; another one with a standard neural network as the solution network and, hence, a penalty term in the loss function is added to enforce the boundary condition. The first option, i.e., DNN-based least square with the special network structure described in \eqref{discontinuous_wind_network_autofill} to parametrize the PDE solution, fails to find a reasonable solution even though the optimization loss is almost zero as shown by Figure \ref{discontinuous_wind_error_list}. One possible reason is due to the fact that the square loss in the strong form is $0$ for $b(x,y)$, since DNN-based least square samples points randomly in the ``interior" but not on the discontinuous line with probability almost 1. Therefore, even if the generic network $\hat \phi_s(\Bx,\theta_s)$ is not $0$ at the beginning, no information of the discontinuity is captured by the strong form in DNN-based least square and, hence, the solution network will converge to $0$, resulting in a fake solution satisfying the equation almost everywhere in the strong sense. However, this solution is mathematically wrong in the weak sense. For instance, the derivatives across the discontinuity contain Dirac’s delta functions.

The second option of DNN-based least square can provide a meaningful solution and serves as a good baseline for Friedrichs learning. Figure \ref{discontinuous_wind_error} shows the point-wise error after 100,000 iterations by DNN-based least square with a boundary penalty term and Friedrichs learning.
Friedrichs learning can capture the location of discontinuous line with better accuracy than DNN-based least square. The error curve of DNN-based least square in the $L^2$ norm is shown in \ref{discontinuous_wind_error_list} (the red line) and the iteration error cannot be improved anymore at the early beginning. DNN-based least square with a boundary penalty term provides a solution with an $L^2$ error $9.35e\text{-}2$ after $100,000$ iterations and this error is almost $4$ times as the error by Friedrichs learning.

\begin{figure}[htbp]
    \subfigure[Top, the point-wise error for solution by DNN-based least square; Middle, the point-wise error for solution by Friedrichs learning; Bottom, the point-wise test function value by Friedrichs learning.]{
	\begin{minipage}[t]{0.41\textwidth}
		\centering
		\includegraphics[width=0.8\linewidth]{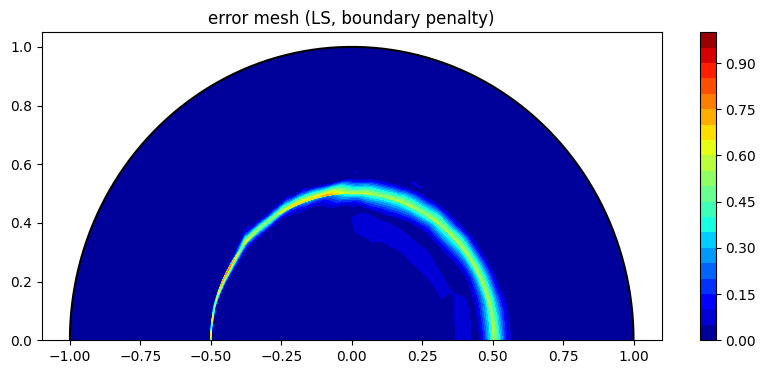}
        \includegraphics[width=0.8\linewidth]{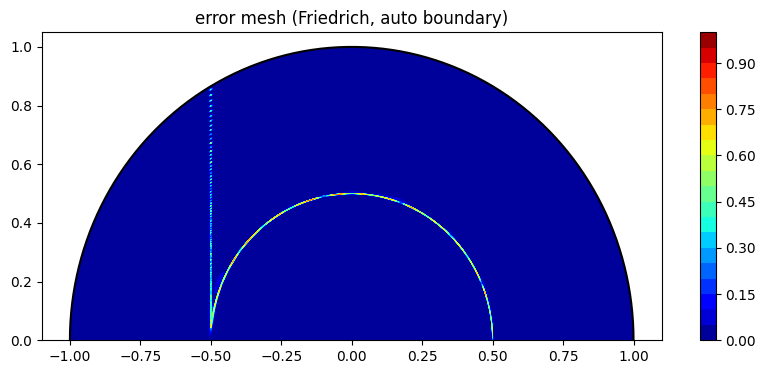}
        \includegraphics[width=0.8\linewidth]{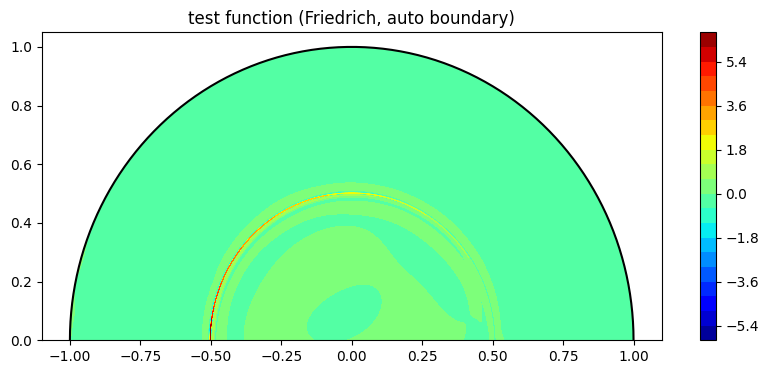}
        \vfil
		%\caption{exact solution $u^{*}(x)$}
        \label{discontinuous_wind_error}
	\end{minipage}
	}
    \hfil
	\subfigure[Upper, the relative $L^2$ error curve with respect to the iteration number for three different algorithm settings; Lower, the running DNN-based least square loss with respect to the iteration number.]{
	\begin{minipage}[t]{0.41\textwidth}
    \vspace{-2.3cm}
		\centering
		\includegraphics[width=0.7\linewidth]{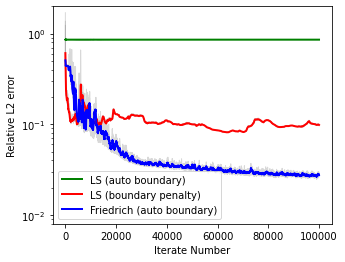}
        \includegraphics[width=0.7\linewidth]{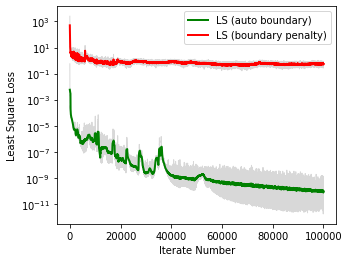}
		%\caption{solution $u(x)$ by GAN}
        \vfil
        \label{discontinuous_wind_error_list}
	\end{minipage}
	}
 \hfil
\centering
\caption{Numerical results of Equation \eqref{advection} when the exact solution is chosen as \eqref{discontinuous_wind_exact}. }
\end{figure}
\begin{figure}[htbp]
    \subfigure[Projected solution of DGFEM with adaptive mesh grid.]{
	\begin{minipage}[t]{0.41\textwidth}
		\centering
		\includegraphics[width=0.8\linewidth]{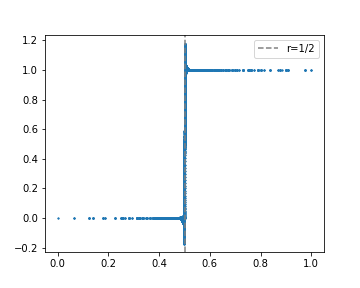}
        \vfil
		%\caption{exact solution $u^{*}(x)$}
        \label{discontinous_wind_project_DGFEM}
	\end{minipage}
	}
    \hfil
	\subfigure[Projected solution of Friedrichs learning. The value of points 0.005 Euclid distance away from $x=-\frac{1}{2}$ is adjusted to true value.]{
	\begin{minipage}[t]{0.41\textwidth}
    \vspace{-4.3cm}
		\centering
		\includegraphics[width=0.8\linewidth]{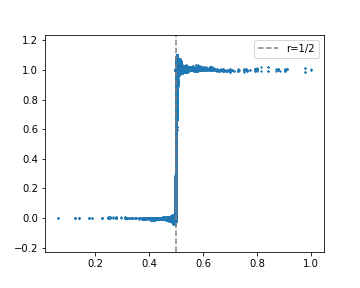}
		%\caption{solution $u(x)$ by GAN}
        \vfil
        \label{discontinous_wind_project_FL}
	\end{minipage}
	}
 \hfil
\centering
\caption{Numerical results of Equation \eqref{advection} when the exact solution is chosen as \eqref{discontinuous_wind_exact}. }
\end{figure}

\begin{table}
\centering
\begin{tabular}{|c|c|c|c|c|c|c|}
\hline
  Parameters & $n$  & $m_s$ & $m_t$ &$N$  & $N_b$\\ \hline
  Value & $100,000$  & $150$ & $150$ &$45,000$ & $5,000$\\ \hline
  Parameters  & $\eta_s^{(0)}$ &$ \eta_t^{(0)}$ & $\nu_s$ & $\nu_t$ & parameter number \\\hline
 Value & $3e\text{-}4$ & $3e\text{-}3$ &$15,000$&$15,000$ & 113,850\\ \hline
\end{tabular}
\vspace{0.25cm}
\caption{\em The parameters for the Friedrichs learning solver of the experiment in Section \ref{subsec:disconti_wind}.}
\label{tab:disconti_wind}
\end{table}

\begin{table}
\centering
\begin{tabular}{|c|c|c|c|c|c|c|}
  \hline
  Parameters &$n$ & $N$   & $\eta_s^{(0)}$ &$\nu_s$ &  $m_s$ \\ \hline
  Value & $100,000$ & $45,000$   & $1e\text{-}3$ & $15,000$ & $150$ \\ \hline
\end{tabular}
\vspace{0.25cm}
\caption{\em The parameters of the comparative experiment in Section \ref{subsec:disconti_wind}.}
\label{tab:discontinous-wind-LS}
\end{table}

\subsection{Green's Function}
\label{subsec:green}
The next example is to identify the Green's function of the Laplacian operator by solving
\begin{equation}\label{eqn:gf}
\Delta u(\Bx) = \delta_0(\Bx),
\end{equation}
where $\delta_0(\Bx)$ is the Dirac's delta function at the origin. In this example, we solve the above equation on a 3D unit ball $\Omega = \{\Bx\in\mathbb{R}^3|\left\|\Bx\right\|_2 \le 1\}$. The true solution is
\begin{equation}
\label{green_true_slu}
u^*(\Bx) =  \frac{1}{8\pi\left\|\Bx\right\|_2},
\end{equation}
and the given Dirichlet boundary condition is $u(\Bx) =  \frac{1}{8\pi}$ on $\partial \Omega$. Although the exact solution is in $H^1$ and has strong singularity near the origin, Friedrichs learning can provide an approximate solution with a small error as shown in Figure \ref{green_true} and \ref{green_error_FL}. Figure \ref{green_error_FL} visualizes the point-wise relative error of the solution by Friedrichs learning. We can see that, except for those locations that are very close to the origin, the relative errors are not greater than $1e\text{-}1$. In Table \ref{tab:green_error}, we summarize the relative $L^2$ errors of the solution by Friedrichs learning in the region of $\Omega \backslash \mathcal{B}(\bm{0},\varepsilon)$ with $\varepsilon$ equal to $0.001, 0.01, 0.1,0.2$, respectively. Therefore, the solution is accurate when the location is not very close to the origin.

As a comparison, the DNN-based least square method cannot find a meaningful solution for the Green's function. The right hand side function of \eqref{eqn:gf} is a Dirac Delta function and, hence, cannot be captured by the discrete analog of the least square loss function via random sampling. Therefore, even if the DNN-based least square method can be applied to form an optimization problem, the minimizer of this problem will return a constant function as a solution, which has a large error.

\begin{figure}[htbp]
    \subfigure[The cross section of the Green's function at $x_3=0$. The Green's function has strong singularity near the origin.]{
	\begin{minipage}[t]{0.29\textwidth}
		\centering
		\includegraphics[width=0.72\linewidth]{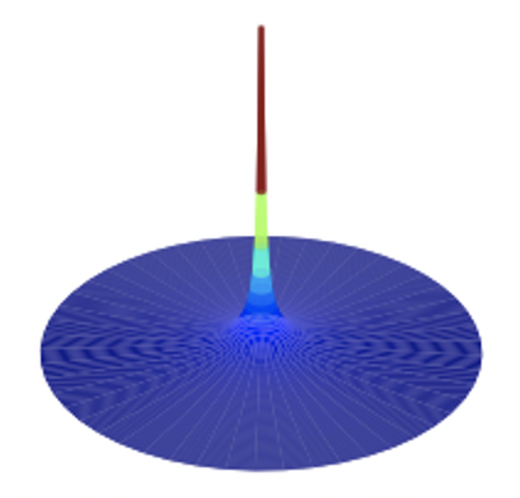}
        \vfil
		%\caption{exact solution $u^{*}(x)$}
        \label{green_true}
	\end{minipage}
	}
    \hfil
	\subfigure[The projected point-wise relative error by Friedrichs learning on the slice $x_3=0$.]{
	\begin{minipage}[t]{0.29\textwidth}
		\centering
		\includegraphics[width=0.85\linewidth]{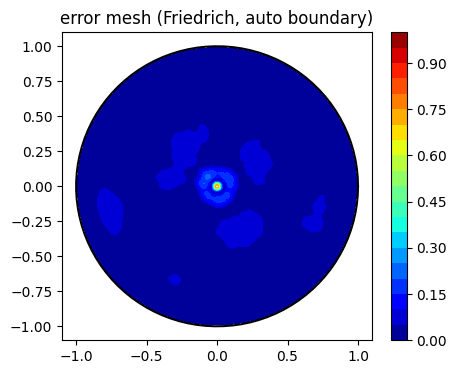}
		%\caption{solution $u(x)$ by GAN}
        \vfil
        \label{green_error_FL}
	\end{minipage}
	}
      \hfil
	\subfigure[The relative $L^2$ and  maximum error curve with respect to the iteration number.]{
	\begin{minipage}[t]{0.29\textwidth}
		\centering
		\includegraphics[width=0.85\linewidth]{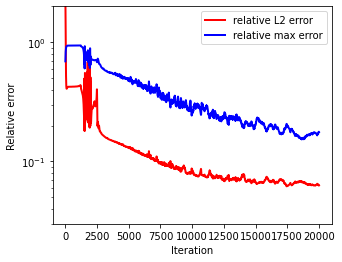}
		%\caption{solution $u(x)$ by GAN}
        \vfil
        \label{green_error_list}
	\end{minipage}
    }
 \hfil
\centering
\caption{Numerical results of Equation \eqref{advection} when the exact solution is chosen as \eqref{green_true_slu}. }
\end{figure}

\begin{table}
\centering
\begin{tabular}{|c|c|c|c|c|c|c|c|}
\hline
  Parameters & $n$  & $m_s$ & $m_t$ &$N$  & $N_b$ & $\eta_s^p$\\ \hline
  Value & $20,000$  & $100$ & $100$ &$45,000$ & $5,000$& $1e\text{-}4$\\ \hline
  Parameters  &$\eta_t^p$ & $\eta_s^{(0)}$ &$ \eta_t^{(0)}$ & $\nu_s$ & $\nu_t$ & parameter number \\\hline
 Value &$2e\text{-}4$& $1e\text{-}5$ &$2e\text{-}5$&$10,000$&$10,000$ & $51,000$\\ \hline
\end{tabular}
\vspace{0.25cm}
\caption{\em The parameters for the Friedrichs learning solver of the experiment in Section \ref{subsec:green}.}
\label{tab:green}
\end{table}

%\begin{table}
%\centering
%\begin{tabular}{|c|p{2.5cm}<{\centering} p{4cm}<{\centering}|}
%  \hline
% $\varepsilon$ & mean  &  standard variance \\ \hline
%  $0.2$ & $3.47\mathrm{e}{-2}$ & $1.20\mathrm{e}{-4}$\\
%  $0.1$  & $4.43\mathrm{e}{-2}$  &$8.00\mathrm{e}{-4} $ \\
%$ 0.01$ & $8.16\mathrm{e}{-2}$&$2.34\mathrm{e}{-2}$\\
% $0.001$ & $9.39\mathrm{e}{-2}$&$3.75\mathrm{e}{-2}$\\ \hline
%\end{tabular}
%\vspace{0.25cm}
%\caption{\em The statistics of the point-wise relative $L^2$ errors by Friedrichs learning in the region of $\Omega \backslash \mathcal{B}(\bm{0},\varepsilon)$ in Figure \ref{green_error_FL} for the Green's function experiment in Section \ref{subsec:green}.}
%\label{tab:green_error}
%\end{table}

\begin{table}
\centering
\begin{tabular}{|c|p{2.5cm}<{\centering} |}
  \hline
 $\varepsilon$ & mean  \\ \hline
  $0.2$ & $3.47e\text{-}2$ \\
  $0.1$  & $4.43e\text{-}2$  \\
$ 0.01$ & $8.16e\text{-}2$ \\
 $0.001$ & $9.39e\text{-}2$\\ \hline
\end{tabular}
\vspace{0.25cm}
\caption{\em The relative $L^2$ errors by Friedrichs learning in the region of $\Omega \backslash \mathcal{B}(\bm{0},\varepsilon)$ for the Green's function experiment in Section \ref{subsec:green}.}
\label{tab:green_error}
\end{table}

\subsection{High-Dimensional Advection-Reaction Equation}
\label{subsec:highdim}
We consider a 10D advection equation with discontinuity in the domain $[0,1]^{10}$. In particular, we find $u=u(\Bx)$ such that
\begin{equation}
2\Big(1+\exp\Big(-\big(\sum_{i=3}^{10}x_i\big)^2\Big)\Big)u_{x_1} + \exp(2x_1)u_{x_2} = 0,
\end{equation}
 where $u_{x_1}=\frac{\partial u}{\partial x_1}$ and $u_{x_2}=\frac{\partial u}{\partial x_2}$. The exact solution is  \begin{equation}
\label{highdim_exact}
 u^*(\Bx) = g\Big(\exp(2x_1)-4\Big(1+\exp\Big(-\big(\sum_{i=3}^{10}x_i\big)^2\Big)\Big)x_2\Big),
\end{equation}
where \begin{equation*}
g(x) = \left\{
\begin{array}{l}
1, \ \ x >0 \\
0, \ \ x\le0
\end{array}
\right. .
\end{equation*}
The Dirichlet boundary condition is given on the inflow boundary $\{\Bx| x_1=0 \ {\rm or } \  x_2 =0\}$.
\begin{figure}[htbp]
    \subfigure[The projected point-wise error by Friedrichs learning on the slice $x_i=\frac{1}{2}, i=3,4,\dots,10$.]{
	\begin{minipage}[t]{0.45\textwidth}
		\centering
		\includegraphics[width=0.8\linewidth]{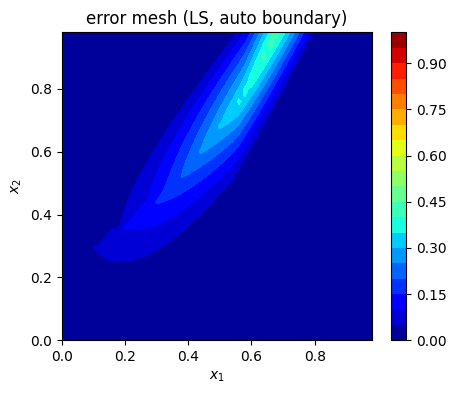}
        \vfil
		%\caption{exact solution $u^{*}(x)$}
        \label{highdim_error_PINN}
	\end{minipage}
	}
    \hfil
	\subfigure[The projected point-wise error by DNN-based least square on the slice $x_i=\frac{1}{2}, i=3,4,\dots,10$.]{
	\begin{minipage}[t]{0.45\textwidth}
		\centering
		\includegraphics[width=0.8\linewidth]{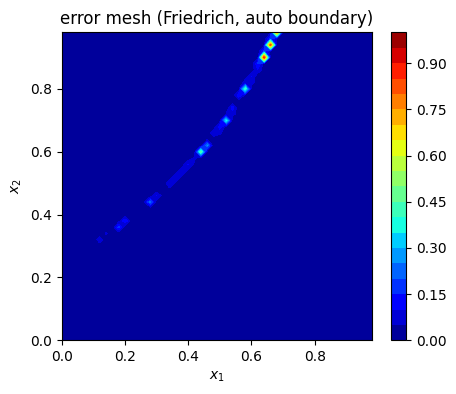}
        \vfil
        \label{highdim_error_FL}
	\end{minipage}
	}
 \hfil
 \subfigure[The projected point-wise test function value on the slice $x_i=\frac{1}{2}, i=3,4,\dots,10$.]{
	\begin{minipage}[t]{0.45\textwidth}
		\centering
		\includegraphics[width=0.8\linewidth]{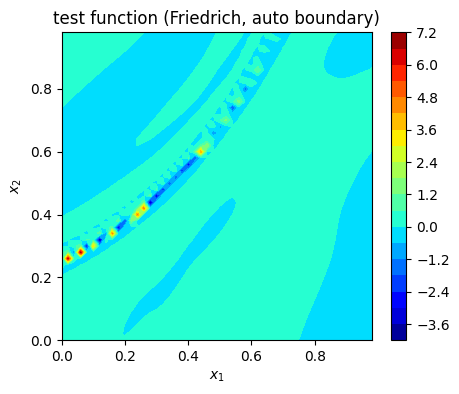}
        \vfil
        \label{highdim_test}
	\end{minipage}
	}
 \hfil
 \subfigure[The relative $L^2$ error curve with respect to the iteration number by DNN-based least square and Friedrichs learning.]{
	\begin{minipage}[t]{0.45\textwidth}
		\centering
		\includegraphics[width=0.75\linewidth]{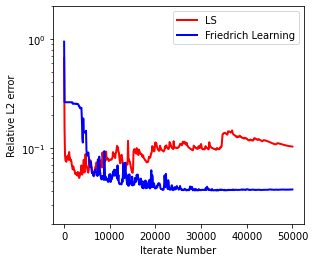}
        \label{highdim_errorlist}
	\end{minipage}
	}
 \hfil
\centering
\caption{Numerical results of Equation \eqref{advection} when the exact solution is chosen as \eqref{highdim_exact}. }
\end{figure}

Figure  \ref{highdim_error_PINN} and Figure \ref{highdim_test} show that Friedrichs learning can identify the location of low regularization by test DNNs in this high-dimensional problem. After 50,000 outer iterations, we obtain an approximate solution with a relative $L^2$ error $4.034e\text{-}2$. As a comparison, the DNN-based least square is also applied to solve the same problem and the relative $L^2$ error is $1.015e\text{-}1$, which is much larger than the one by Friedrichs learning. In Figure \ref{highdim_errorlist}, we observe that DNN-based least square is not stable in optimization due to the curved discontinuity, and stops ultimately at a solution with a large error.

\begin{table}
\centering
\begin{tabular}{|c|c|c|c|c|c|c|c|}
\hline
  Parameters & $n$  & $m_s$ & $m_t$ &$N$  & $N_b$ & $\eta_s^p$ \\ \hline
  Value & $50,000$  & $150$ & $150$ &$45,000$ & $5,000$ & $3e\text{-}4$\\ \hline
  Parameters & $\eta_t^p$   & $\eta_s^{(0)}$ &$ \eta_t^{(0)}$ & $\nu_s$ & $\nu_t$ & parameter number \\\hline
 Value & $3e\text{-}3$  & $5e\text{-}5$ & $5e\text{-}4$ &$20,000$&$20,000$ & $115,050$\\ \hline
\end{tabular}
\vspace{0.25cm}
\caption{\em The parameters for the Friedrichs learning solver of the experiment in Section \ref{subsec:highdim}.}
\label{tab:highdim}
\end{table}

\begin{table}
\centering
\begin{tabular}{|c|c|c|c|c|c|c|}
  \hline
  Parameters &$n$ & $N$   & $\eta_s^{(0)}$ &$\nu_s$ &  $m_s$ \\ \hline
  Value &$50,000$ &$45,000$   & $1e\text{-}3$ & $20,000$ & $150$ \\ \hline
\end{tabular}
\vspace{0.25cm}
\caption{\em The parameters of the comparative experiment in Section \ref{subsec:highdim}.}
\label{tab:highdim-LS}
\end{table}

\subsection{Maxwell Equations}
\label{subsec:maxwell}
In the last example, we consider Maxwell equations \eqref{eqn:maxwell} defined in the domain $\Omega=[0,\pi]^3$.  Let $\bm H$ and $\bm E$  be the solutions of the Maxwell equations \eqref{eqn:maxwell}
with $\mu = \sigma = 1$. Let  $\bm f,\bm g  \in [L^2(\Omega)]^3$ be
$
\bm f =
(0,0,0)^{\intercal}$ and $
\bm g =(
3\sin y \sin z,
3\sin z \sin x,
3\sin x \sin y)^{\intercal}$.
The boundary condition is set as $\bm E\times \bm n = 0$, which is an ideal conductor boundary condition. The exact solutions to these equations are
$
\bm H^* =(
\sin x (\cos z - \cos y),
\sin y (\cos x - \cos z),
\sin x (\cos y - \cos x))^{\intercal}$  and $\bm E^* =
(\sin y \sin z,
\sin z \sin x,
\sin x \sin y)^{\intercal}$.
Considering test functions $(\varphi_{\bm H}^{\intercal}, \varphi_{\bm E}^{\intercal})^{\intercal}$ in the space $V^* = V$ mentioned in \eqref{coneboundary}, we set up DNNs to satisfy the boundary conditions $\varphi_{\bm E} \cdot \bm n = 0$ and $\varphi_{\bm H} \times \bm n = 0$, where $\bm n$ is the unit outward normal direction to the boundary. Note that the domain is a cube, the normal vector is parallel to one of the unit vectors.  The boundary condition above is indeed a Dirichlet  boundary. For example, $S_1 = \{x=\pi\} \cap \partial \Omega$ on the right surface, implies that $E_2 |_{S_1} = E_3 |_{S_1} = 0$. It is worth pointing out that the Dirichlet boundary for $(E_i^{\intercal}, (\varphi_{\bm H})_i)^{\intercal}$ closes the faces of the cube as shown in Figure \ref{sample_diagram2}. Here, we denote by $ E_i (i=1,2,3)$  the $i$-th component of the vector $\bm E$ and the same applies to other notations. % the couple is multiplied with each other in the process of derivation by parts.

To solve the Maxwell equations by Friedrichs learning, we initialize sub-networks of width $m_s$ for vector functions and  each sub-network decides one output value of the vector function. The test networks are set up similarly. We list all the parameters used in this experiment in Table \ref{tab:maxwell}. After $20,000$ outer iterations, we obtain an $L^2$ relative error $1.766e\text{-}2$ and an $L^\infty$ relative error $3.467e\text{-}2$.
Figure \ref{2D_maxwell_diff_E} and Figure \ref{2D_maxwell_diff_H} illustrate the absolute difference between $E_1$ and $(\phi_{\bm E})_1$ and the absolute difference between $H_1 $ and $(\phi_{\bm H})_1$ after $20,000$ outer iterations.

% To solve the Maxwell equations by Friedrichs learning, we initialize two ResNets $\phi_{s}^{(\bm H)}(\Bx,\theta_{s}^{(\bm H)})$ and  $\phi_{s}^{(\bm E)}(\Bx,\theta_{s}^{(\bm E)})$ with ReLU activation functions and two ResNets $\phi_{t}^{(\bm H)}(\Bx,\theta_{t}^{(\bm H)})$ and $\phi_{t}^{(\bm E)}(\Bx,\theta_{t}^{(\bm E)})$ with Tanh activation functions corresponding to four vector functions $(\bm H,\bm E)$ and $(\varphi_{\bm H},\varphi_{\bm E})$. Because we have vector functions, we set a ResNet, e.g., $\phi_{s}^{(\bm H)}(\Bx,\theta_{s}^{(\bm H)})$, using three sub-networks with width $m_s$ and depth $7$. Each sub-network decides one output value of the vector function. The other three networks are set up similarly. We list all the parameters used in this experiment in Table \ref{tab:maxwell}. After $20,000$ outer iterations, we obtain an $L^2$ relative error $1.766e-2$ and an $L^\infty$ relative error $3.467e-2$  {  as illustrated in Figure \ref{fig_loss_maxwell}. The exact solution of $E_1$   and the corresponding numerical solution $(\phi_{\bm E})_1$ are presented in Figure \ref{2D_maxwell_E}. The absolute difference between $E_1 $ and $(\phi_{\bm E})_1$ after $5,000$ outer iterations is shown in Figure \ref{2D_maxwell_diff_E}. The exact solution of $H_1$   and the corresponding numerical solution $(\phi_{\bm H})_1$ are illustrated in Figure \ref{2D_maxwell_H}. The absolute difference between $H_1 $ and $(\phi_{\bm H})_1$ after $5,000$ outer iterations is shown in Figure \ref{2D_maxwell_diff_H}.}

\begin{figure}[htbp]
    \subfigure[The boundary conditions of $(E_1, (\phi_{\bm H})_1)$.]{
	\begin{minipage}[t]{0.45\textwidth}
		\centering
		\includegraphics[width=0.75\linewidth]{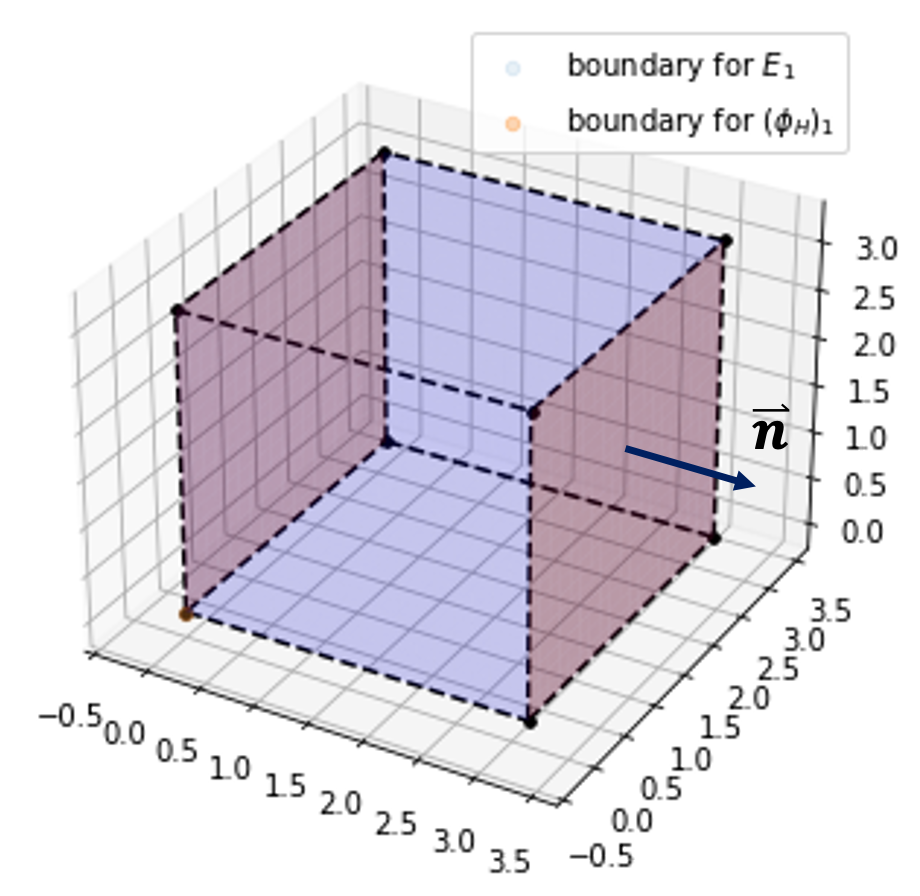}
		%\caption{exact solution $u^{*}(x)$}
        \label{sample_diagram2}
	\end{minipage}
	}
	\hfill
	\subfigure[The relative error versus iterations.]{
	\begin{minipage}[t]{0.45\textwidth}
		\centering
		\includegraphics[width=0.95\linewidth]{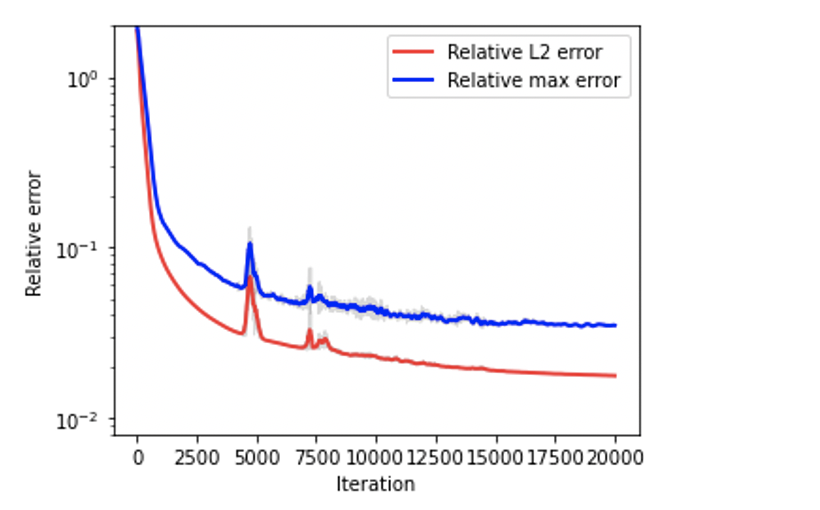}
		%\caption{Comparison of $u^{*}(x)$ and $u(x)$ in 3D view}
        \label{fig_loss_maxwell}
	\end{minipage}
	}
   	\hfill
	\subfigure[The absolute difference between $E_1 $ and $(\phi_{\bm E})_1$ after $20,000$ outer iterations. ]{
	\begin{minipage}[t]{0.45\textwidth}
		\centering
		\includegraphics[width=0.8\linewidth]{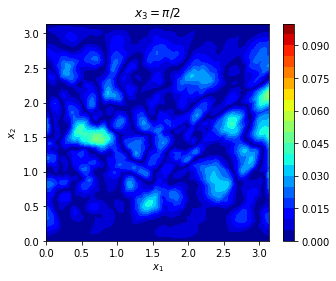}
		%\caption{solution $u(x)$ by GAN}
        \label{2D_maxwell_diff_E}
	\end{minipage}
    }
    \hfill
	\subfigure[The absolute difference between $H_1 $ and $(\phi_{\bm H})_1$ after $20,000$ outer iterations. ]{
	\begin{minipage}[t]{0.45\textwidth}
		\centering
		\includegraphics[width=0.8\linewidth]{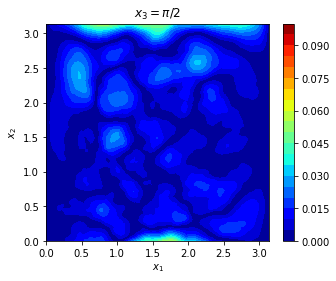}
		%\caption{solution $u(x)$ by GAN}
        \label{2D_maxwell_diff_H}
	\end{minipage}
    }
	\hfill
\centering
\caption{Numerical results of Maxwell equations in \eqref{eqn:maxwell}.}
\end{figure}

\begin{table}
\centering
\begin{tabular}{|c|c|c|c|c|c|c|}
  \hline
  Parameters & $n$  & $m_s$ & $m_t$ &$N$  \\ \hline
  Value & $20,000$  & $250$  & $50$ &$50,000$ \\ \hline
  Parameters  & $\eta_s^{(0)}$ &$ \eta_t^{(0)}$ & $\nu_s$ & $\nu_t$ \\\hline
 Value & $3e\text{-}6$ & $3e\text{-}3$ &$8,000$&$15,000$\\ \hline
\end{tabular}
\vspace{0.25cm}
\caption{\em The parameters for Friedrichs learning solver of the experiment in Section \ref{subsec:maxwell}.}
\label{tab:maxwell}
\end{table}

\section{Conclusion}\label{sec:CC}
Friedrichs learning was proposed as a new deep learning methodology to learn the weak solutions of PDEs via Friedrichs seminal minimax formulation. Extensive numerical results imply that our mesh-free method provides reasonably accurate solutions for a wide range of PDEs defined on regular and irregular domains in various dimensions, where classical numerical methods may be difficult to be employed. In particular, Friedrichs learning infers the solution without the knowledge of the location of discontinuity when the solution is discontinuous. Our numerical experiments show that Friedrichs learning can solve PDEs with a discontinuous solution to $O(10^{-2})$ accuracy, while the DNN-based least square method can typically only get $O(10^{-1})$ accuracy. This demonstrates the advantage of the loss function in Friedrichs learning over the naive least square loss function. Compared with traditional FEM methods, Friedrichs learning performs as well as DGFEM with adaptive mesh when no prior knowledge about the discontinuous location is known. Friedrichs learning is better than LSFEM with adaptive mesh when no prior knowledge about the discontinuous location is known. In the future, it is interesting to develop adaptive Friedrichs learning to further reduce the error or the network size.
\vskip 0.2cm

{\bf Acknowledgements.} J. H. was partially supported by NSFC (Grant No. 12071289), the Strategic Priority Research Program of Chinese Academy of Sciences (Grant No. XDA25010402) and Shanghai Municipal Science and Technology Major Project (2021SHZDZX0102). C. W. was partially supported by National Science Foundation Award DMS-2136380 and DMS-2206333. H. Y. was partially supported by the NSF Award DMS-2244988 and DMS-2206333, ONR N00014-23-1-2007, and the NVIDIA GPU grant.

\bibliography{core}
\bibliographystyle{siam}

\end{document}